\setlist{itemsep=3pt}
\newtheorem{prop}{Proposition}
\newtheorem{theo}[prop]{Theorem}
\newtheorem{lemm}[prop]{Lemma}
\newtheorem{coro}[prop]{Corollary}
\newtheorem*{lemm*}{Lemma}
\theoremstyle{definition}
\newtheorem{defi}[prop]{Definition}
\newtheorem{quest}[prop]{Question}
\newtheorem{rema}[prop]{Remark}
\numberwithin{prop}{section}
\theoremstyle{plain}
\newcommand{\RR}{\mathbb{R}}
\newcommand{\TT}{\mathbb{T}}
\newcommand{\ZZ}{\mathbb{Z}}
\newcommand{\cA}{\mathcal A}
\newcommand{\cS}{\mathcal S}
\DeclareMathOperator{\Hom}{Hom}
\DeclareMathOperator{\Ric}{Ric}
\DeclareMathOperator{\imag}{Im}
\DeclareMathOperator{\dist}{dist}
\newcommand{\eps}{\varepsilon}
\title[Covering instability for PSC]{Covering instability for the existence of positive scalar curvature metrics}
\begin{document}

\author{Chao Li}
\address{Courant Institute, New York University, 251 Mercer St, New York, NY 10012, USA}
\email{chaoli@nyu.edu}

\author{Boyu Zhang}
\address{Department of Mathematics, The University of Maryland at College Park, Maryland, 20742, USA}
\email{bzh@umd.edu}

\maketitle

\begin{abstract}
	We show that a closed non-orientable $3$-manifold admits a positive scalar curvature metric if and only if its orientation double cover does; however, for each $4\le n\le 7$, there exist infinitely many smooth non-orientable $n$-manifolds $M$ that are mutually non-homotopy equivalent, such that the orientation double cover of $M$ admits positive scalar curvature metrics, but every closed smooth manifold that is homotopy equivalent to $M$ cannot admit positive scalar curvature metrics. These examples were first introduced by Alpert-Balitskiy-Guth in the study of Urysohn widths.
	
	To prove the nonexistence result, we extend the Schoen-Yau inductive descent approach to non-orientable manifolds. We also discuss band width estimates and the notion of enlargeability for non-orientable PSC manifolds.
\end{abstract}

\section{Introduction}
\label{section.introduction}

The existence of Riemannian metrics with positive scalar curvature (abbreviated as PSC in this article) has been an important topic in geometry and topology. It is often assumed that the manifold under consideration is orientable. Indeed, if a non-orientable manifold has a PSC Riemannian metric, then so does its orientable double cover (by simply taking the covering metric). A natural question is whether the converse statement holds. As we will see from this paper, the answer is positive for $3$-manifolds (see Theorem \ref{theo.3d}), and is negative in higher dimensions. 

\begin{theo}\label{theo.example}
	For each $4\le n\le 7$, there exist infinitely many mutually non-homotopy equivalent smooth closed non-orientable manifolds $M$ with dimension $n$ such that the following hold:
	\begin{enumerate}
		\item The orientation double cover of $M$ admits PSC metrics,
		\item Every closed smooth manifold that is homotopy equivalent to $M$ cannot admit any PSC metric. In particular, $M$ does not admit any PSC metric.
	\end{enumerate}
\end{theo}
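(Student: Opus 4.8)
The plan is to split the statement into its two clauses. Clause~(1) is a soft PSC construction carried out upstairs on the orientation double cover; clause~(2) rests on a non-orientable refinement of the Schoen--Yau inductive descent together with the fact that the obstruction it produces is a homotopy invariant. For each $n$ I would take the Alpert--Balitskiy--Guth family $\{M_{n,k}\}_{k\ge1}$ and record only the two features of it that the proof needs. First, the orientation double cover $\widetilde M_{n,k}$ is obtained from a manifold that manifestly admits PSC --- a sphere, or a product $S^p\times T^q$ with $p\ge2$, or a homogeneous space with finite fundamental group --- by finitely many surgeries of codimension at least $3$. Second, $M_{n,k}$ carries what I will call a \emph{non-orientable Schoen--Yau tower}: a finite nested sequence of smoothly embedded hypersurfaces, the outermost of which is Poincar\'e dual to $w_1(M_{n,k})\in H^1(M_{n,k};\ZZ/2)$, each member homologically essential inside the previous one, whose innermost member is a surface of non-positive Euler characteristic. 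The nesting pattern is recorded entirely by homotopy-invariant data --- the Stiefel--Whitney classes (homotopy invariant by Wu's theorem), the low-degree cohomology ring, and the mod-$2$ fundamental class $[M_{n,k}]$ --- so it is transported along any homotopy equivalence out of $M_{n,k}$; and distinct values of $k$ are separated by an elementary homotopy invariant such as the fundamental group or the cohomology ring, which yields infinitely many mutually non-homotopy-equivalent $M_{n,k}$.

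\emph{Clause~(1).} Since $\widetilde M_{n,k}$ is obtained from a PSC manifold by surgeries of codimension $\ge3$, the Gromov--Lawson/Schoen--Yau surgery theorem equips $\widetilde M_{n,k}$ with a PSC metric. (If the ABG double covers are instead presented as explicit products with a positively curved factor, this step requires nothing.)

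\emph{Clause~(2).} Let $M'$ be a closed smooth manifold homotopy equivalent to $M=M_{n,k}$, and suppose for contradiction that $M'$ carries a PSC metric $g$. Transport the non-orientable Schoen--Yau tower to $M'$; in particular $w_1(M')\ne0$. The first descent step uses that the $\ZZ/2$ class Poincar\'e dual to $w_1(M')$ is represented by a smooth embedded closed hypersurface, and one minimizes $\ZZ/2$--mass in this class: because $4\le n\le7$ the Federer singular set is empty, so the minimizer is a smooth, closed, embedded, stable minimal hypersurface $\Sigma_1$, which is moreover orientable since $w_1(\Sigma_1)=w_1(M')|_{\Sigma_1}-w_1(\nu_{\Sigma_1})=0$. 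The Schoen--Yau second-variation and conformal-rescaling argument then gives $\Sigma_1$ a PSC metric in which the next member of the tower is still homologically essential. Iterating --- now in the orientable setting, but still tracking twisted coefficients whenever a later dual hypersurface happens to be non-orientable --- lowers the dimension by one at each stage, and after finitely many steps one reaches a closed surface carrying a PSC metric and representing a class of non-positive Euler characteristic, contradicting Gauss--Bonnet. Hence no $M'$ homotopy equivalent to $M$ admits PSC; in particular $M$ does not. (If the innermost stratum of the tower is $3$--dimensional rather than $2$--dimensional, one stops at a closed $3$--manifold whose orientation double cover admits no PSC and concludes by Theorem~\ref{theo.3d}.)

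\emph{The main obstacle.} The crux is making the Schoen--Yau descent genuinely run through a non-orientable manifold while certifying that the bookkeeping stays homotopy invariant. Three points need care: (i) \emph{regularity} --- one minimizes in $\ZZ/2$-- or twisted-$\ZZ$--homology classes rather than ordinary integral ones, so one must invoke the corresponding regularity theory, and this is exactly where the bound $n\le7$ enters, to keep the singular set empty; (ii) \emph{orientation bookkeeping} --- a hypersurface produced at a later stage may itself have nonzero $w_1$ and nontrivial normal bundle, so the conformal factor and the first eigenfunction must be chosen for the correct twisted Jacobi operator for positivity of scalar curvature to survive the rescaling; (iii) \emph{non-degeneracy of the tower along the descent} --- one must check that each successive class restricts nontrivially to the minimal hypersurface produced by the previous step and that the terminal surface has $\chi\le0$, which is what the ABG construction is designed to guarantee and what makes the final obstruction homotopy invariant. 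By contrast, on $\widetilde M_{n,k}$ the would-be outermost class $w_1$ vanishes, so the tower never starts --- consistent with, and explained by, clause~(1).
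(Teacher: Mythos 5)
Your proposal takes a genuinely different route from the paper on the key nonexistence step, and that route has a real gap: it runs the Schoen--Yau descent through $w_1$ and $\ZZ/2$ cohomology, which cannot work. The paper is careful to use \emph{integral} cohomology classes $\alpha_1,\dots,\alpha_{n-1}\in H^1(M;\ZZ)$ with $\alpha_1\smile\cdots\smile\alpha_{n-1}\neq0$ (coming from a map $X\to\TT^{2k}$ of nonzero $\ZZ_2$ degree), and explicitly warns in Remark~\ref{rema.cohomology.Z2} that replacing $\ZZ$ by $\ZZ_2$ destroys the argument: $\RR P^n$ has a maximally rich $\ZZ_2$ cohomology ring generated by $w_1$ yet carries PSC. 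Your proposed first step --- minimize $\ZZ/2$-mass in the class Poincar\'e dual to $w_1(M')$ --- would apply verbatim to $\RR P^n$ and therefore cannot possibly produce an obstruction.

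The concrete failure point is two-sidedness, not orientability. Your Whitney-sum computation showing $w_1(\Sigma_1)=0$ is correct, but that makes $\Sigma_1$ \emph{orientable}, not \emph{two-sided}: since $w_1(\nu_{\Sigma_1})=w_1(M')\big|_{\Sigma_1}$, the normal bundle is typically nontrivial precisely when $\Sigma_1$ is dual to $w_1$ (compare $\RR P^2\subset\RR P^3$). For a one-sided stable minimal hypersurface the second-variation inequality only controls equivariant (odd on the two-sided double cover) test functions, and the Schoen--Yau rearrangement does not deliver a conformal PSC metric on $\Sigma_1$ itself. Your point~(ii) about a ``twisted Jacobi operator'' names the difficulty but does not resolve it; there is no twisted version of the rearrangement that kills this case, and if there were it would again contradict $\RR P^n$. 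The paper sidesteps all of this by a different mechanism: pass to the orientation double cover $\hat M$, take the Poincar\'e dual of $\pi^*\alpha_{n-1}$, and minimize area among integral currents that are \emph{anti-invariant} under the deck transformation $\gamma$; anti-invariance together with the fact that $\gamma$ reverses the orientation of $\hat M$ forces the projected hypersurface $\Sigma\subset M$ to be two-sided, after which the standard rearrangement and a Thom-class argument propagate the integral cup-product condition to $\Sigma$. That anti-invariant minimization in the double cover is the genuinely new ingredient of Theorem~\ref{theo.cohomology}, and your proposal is missing it. (Your clause~(1) is essentially the paper's Lemma~\ref{lem_hatX_PSC}, and distinguishing the infinitely many examples is done in the paper via the Euler characteristic computation of Lemma~\ref{lem_Euler_X}; those parts are fine.)
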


Previously, there have been examples where a closed manifold $M$ does not admit any PSC metric while a suitable finite cover of $M$ does. The first such example - an exotic $\RR P^9$ - was discovered by B\'erard Bergery in \cite[Example 9.1]{Bergery1983}. LeBrun\footnote{Manifolds in \cite{LeBrun2003} actually depict a sign change on the Yamabe invariants after taking double cover.} \cite{LeBrun2003} and Hanke--Kotschick--Wehrheim \cite{HankeKotschickWehrheim2003} constructed more examples of this type for orientable $4$--manifolds (see also a subsequent work \cite{AIP2015} and Rosenberg \cite[1.2]{Rosenberg1986conjecture}).  Ruberman--Saveliev \cite[Example 6.4]{ruberman2007dirac} constructed a non-orientable example in dimension 4. We remark here that in all the above examples, the obstructions for PSC metrics depend on the smooth structure of $M$; except for the examples by Bergery which were in dimensions $\ge 9$, all other examples were in dimension 4, and the invariants that obstructed PSC were only defined in dimension 4. It would be interesting to further compare these approaches.


\subsection{A brief description of the construction}
We now briefly describe the construction of our examples. We will give full details of the construction and its basic geometric and topological properties in Section \ref{section.construction.X}.

We describe a construction for $n=4$ and $6$. More generally, taking product with a torus $\TT^l$ gives more of such examples, as long as the result is of dimensions $\le 7$. The example described here was first introduced by Alpert--Balitskiy--Guth \cite[Example 11]{ABG2024}, based on a classical example by Gromov \cite[$H_1''$]{Gromov1988width}. 

The original construction in \cite{ABG2024} gives a metric space with very interesting metric-geometric properties (most notably on Urysohn widths). By adding a generic perturbation, this metric space can be modified to a smooth manifold with similar properties. However, in order to understand differential geometric invariants - e.g. scalar curvature - we need to have a more precise control on the manifold. Therefore, instead of directly following the construction in \cite{ABG2024}, we will take a different approach by defining the desired manifold as a piecewise linear (PL) manifold. It is a classical result in geometric topology that every PL manifold with dimension $\le 6$ has a unique smoothing (see \cite[Theorem 2]{milnor2011differential}). Working in the PL category allows us to define the desired manifold explicitly without taking any perturbations. This is crucial for the proof of the existence of PSC metrics on the orientation double cover. 

The manifold is constructed as follows.
Write $n=2k$ with $k=2$ or $3$. Let $Z$ be the subset of $\RR^{2k+1}$ consisting of all points such that at most $k$ coordinates are non-integers. Equivalently, if we endow the unit interval $[0,1]$ with the CW complex structure that has two 0-cells and one 1-cell, endow $[0,1]^{2k+1}$ with the product CW complex structure, write $\RR^{2k+1}$ as the union of $[0,1]^{2k+1} + v$ for all $v\in \ZZ^{2k+1}$ and consider the induced CW complex structure on $\RR^{2k+1}$, then $Z$ equals the $k$--skeleton of $\RR^{2k+1}$. Define $Z' =  Z + (\tfrac 12, \cdots, \tfrac 12)$.

As we will show in Section \ref{section.construction.X}, there exists a triangulation of $\mathbb{R}^{2k+1}$ such that $Z,Z'$ are subcomplexes; moreover, we will show that there exists a PL submanifold $X_0\subset \mathbb{R}^{2k+1}$ with the following properties:

\begin{enumerate}
	\item $\RR^{2k+1}\setminus X_0$ has two connected components $U, U'$,
	\item The closures of $U, U'$ are regular neighborhoods of $Z, Z'$ respectively. 
	\item $X_0$ is invariant under translations by vectors in $\ZZ^{2k+1}$ and $\ZZ^{2k+1}+(\frac12,\dots,\frac12)$. 
\end{enumerate}

Fix an integer $L>0$\footnote{Different choices of $L$ give mutually non-homotopy equivalent examples for Theorem \ref{theo.example} (see Lemma \ref{lem_Euler_X}), and our proof works the same for all such choices.}, consider the vectors 
\[
v_1 = (L,0,\cdots, 0), v_2 = (0,L,0,\cdots,0), \cdots, v_{2k} = (0,\cdots, 0, L, 0),
\]
and
\[
v_{2k+1} = (\tfrac12, \tfrac12, \cdots, \tfrac12, \tfrac12 + L).
\]
Let $G$ be the subgroup of $\RR^{2k+1}$ generated by $v_1,\dots,v_{2k+1}$. Then $G$ is a free abelian group and it acts on $\RR^{2k+1}$ by translations. Define 
\[
X = X_0/G.
\]
We will see in Section \ref{section.construction.X} that $X$ is non-orientable because the translation by $v_{2k+1}$ swaps $Z,Z'$ and hence reverses the orientation of $X_0$. 
When $k=2,3$, define the smooth structure on $X$ to be the unique smoothing of $X$ as a PL manifold. We will show that if $M=X\times \TT ^l$ has dimension $\le 7$, then every closed manifold homotopy equivalent to $M$ admits no PSC metrics, but the orientation double cover of $M$ admits PSC metrics. 

\subsection{Sketch of the proof}
To show that the orientation double cover of $X\times \TT ^l$ admits PSC metrics, let $\hat G$ be the subgroup of  $\RR^{2k+1}$ generated by $v_1,\dots,v_{2k},2v_{2k+1}$, then $\hat G$ is a subgroup of $G$ with index $2$, and $\hat G$ acts on $X_0$ by orientation-preserving PL homeomorphisms. Therefore, the orientation double cover of $X$ is equal to $X_0/\hat G$. Denote the orientation double cover of $X$ by $\hat X$. Then $\hat X$ is the boundary of a regular neighborhood of $Z/\hat G$ in $\RR^{2k+1}/\hat G$. Note that $Z/\hat G$ has codimension $k+1 \ge 3$ in $\RR^{2k+1}/\hat G$. In Lemma \ref{lem_hatX_PSC}, we will use this fact to show that $\hat X$ admits a PSC metric. 

To show that every closed manifold homotopy equivalent to $M=X\times \TT ^l$ does not admit any PSC metric, we observe that the translations of $v_1,\cdots, v_{n-1}$ give rises to integral cohomology classes $\alpha_1,\cdots, \alpha_{n-1}\in H^1(X; \ZZ)$ satisfying
\[\alpha_1\smile \cdots\smile \alpha_{n-1} \ne 0\in H^{n-1}(X;\ZZ).\]
In Section \ref{sec.Schoen.Yau}, we prove in Theorem \ref{theo.cohomology} that this structure of the cohomology ring obstructs the existence of PSC metrics using an extension of the Schoen--Yau inductive descent argument \cite{SY79structure} for non-orientable manifolds. Let us emphasize that it is crucial that we consider integral cohomology instead of cohomology with $\ZZ_2$ coefficients, see Remark \ref{rema.cohomology.Z2}. This result enables us to discuss extensions of Gromov's band width estimates, as well as the notion of enlargeability, to PSC nonorientable manifolds. We carry these out in Section \ref{sec.enlargeability}. We mention here that Gromov--Hanke \cite{GromovHanke} recently considered extensions of the Schoen--Yau inductive descent argument for cohomology groups with other coefficient rings.

\subsection{Connections to macroscopic geometry}
For an integer $d\ge 0$, the Urysohn $d$-width of a metric space $X$, denoted by $UW_d(X)$, measures how well one may approximate $X$ with a $d$-dimensional simplical complex. For any fixed $X$, $UW_d(X)$ is a decreasing function of $d$. In \cite{ABG2024}, it is observed that the manifold $X$ constructed above (with a choice of the parameter $L>0$) has $UW_{n-1}(X) \lesssim L$,  while its orientation double cover satisfies $UW_k(\hat X)\lesssim 1$ (uniformly in $L$). This non-locality for Urysohn widths is our original motivation to study scalar curvature on such manifolds. We refer readers to the recent work of Alpert--Banerjee--Papasoglu \cite{alpert20251urysonwidthcovers} for interesting developments on these problems.

A well-known conjecture by Gromov (see, e.g. \cite{Gromov1988width}) states that there exists a constant $c(n)$ such that $UW_{n-2}(X)\le c(n)$ whenever $(X^n,g)$ has scalar curvature bounded below by $1$. This conjecture is proved when $n=3$ by Liokumovich--Maximo \cite{LiokumovichMaximo} (see also \cite{CLbubbles}) and is open in higher dimensions. Our analysis on the scalar curvature of $X$ and $\hat X$ fits well with Gromov's conjecture: taking the orientation double cover of $X$ simutanously decreases the Urysohn $(n-2)$-width and gives the existence of PSC metrics.

It would be interesting to investigate whether a similar covering instability phenomenon may happen for other curvature conditions - for instance the condition of positive Ricci curvature. Note that the classical metric-geometric properties of $\Ric_g>1$ -  e.g. the Bonnet--Myers' diameter bound (or equivalently the Urysohn $0$-width bound) and the Bishop--Gromov volume comparison - are certainly preserved when passing to a covering space. We remark here that the orientable manifolds (with no PSC metrics) constructed by LeBrun in \cite{LeBrun2003} actually have a double cover admitting a metric with positive Ricci curvature by Sha--Yang \cite{ShaYang}.

\begin{quest}
	Let $X$ be a closed non-orientable manifold and $\hat X$ be the orientation double cover. Suppose $\hat X$ admits a Riemannian metric with positive Ricci curvature. Is it necessarily true that $X$ admits a Riemannian metric with positive Ricci curvature?
\end{quest}

\subsection{Remarks on higher dimensions}

It is an interesting question whether the construction of $X$ in Section \ref{section.construction.X} extends to higher dimensions. Recall that $X$ arises as a PL manifold from a suitably chosen triangulation of $\RR^{2k+1}$ for $n=2k$. When $k \ge 4$, both the nonexistence of PSC metrics on $X$ and the existence of PSC metrics on $\hat X$ are considerably more delicate. We discuss here the difficulties and potential approaches to overcome them.

In our proof of the nonexistence result for $X$, we work in low dimensions to guarantee the regularity of area minimizing hypersurfaces. This proof can be extended to dimensions $\le 11$ ($k\le 5$) with the recent breakthroughs \cite{CMS2023generic,CMS2024improved,CMSW2025generic} on the generic regularity of miniming hypersurfaces. One may also possibly carry out the inductive descent argument with the presence of singularities in light of Schoen--Yau \cite{SY2019singularities}. Note that this proof relies only on the homotopy type of $X$.

The proof that $\hat X$ has a PSC metric is also delicate in higher dimensions. Our definition of the smooth structure on $X$ relies on the fact that every PL manifold with dimension $\le 6$ has a unique smoothing. 
In higher dimensions (when $k\ge 4$), the argument of Lemma \ref{lem_hatX_PSC} works verbatim and shows that the orientation double cover $\hat X$ of $X$ admits \emph{at least one} smooth structure which supports a PSC metric. However, it is a priori unclear whether $X$ admits a smoothing whose orientation double cover agrees with the above smooth structure on $\hat X$.

\subsection{Acknowledgment}
The authors are greatly in debt to Larry Guth for stimulating conversations on the subject of this paper, and Hongbin Sun for discussions on $3$-manifolds. We also thank Misha Gromov, Bernhard Hanke, Claude LeBrun and Daniel Ruberman for bringing up related examples. C.L. was partially supported by an NSF grant (DMS-2202343), a Simons Junior Faculty Fellowship and a Sloan Fellowship. B.Z. was partially supported by an NSF grant (DMS-2405271) and a travel grant from the Simons Foundation.

\section{The construction of $X$}
\label{section.construction.X}

We give a careful definition for the manifold $X$ when $n=4$ or $6$ as described in the introduction. 
Since every PL manifold with dimension $\le 6$ has a unique smoothing \cite[Theorem 2]{milnor2011differential}, it suffices to define $X$ as a PL manifold. 

For each positive integer $m$, we introduce a triangulation $\mathcal{T}_m$ of $[0,1]^m$ as follows. Consider the set $\mathcal{S}$ of all sequences  $(p_0,p_1,\dots,p_m)$, such that:
\begin{enumerate}
	\item  $p_i\in \{0,1\}^m$ for all $i$,
	\item  $p_0=(0,\dots,0)$, $p_m = (1,\dots,1)$,
	\item For each $i=0,\dots, m-1$, the point $p_{i+1}$ is obtained from $p_i$ by changing exactly one coordinate from $0$ to $1$.
\end{enumerate}
There is a bijection between $\mathcal{S}_m$ and the set of permutations of $(1,\dots,m)$ described as follows. For $i\in \{1,\dots,m\}$, define  $e_i$ to be the vector in $\RR^{2k+1}$ such that the coordinate indexed by $i$ is $1$ and all the other coordinates are $0$. Suppose $(\sigma_1,\dots,\sigma_m)$ is a permutation of $(1,\dots,m)$, define the corresponding element $(p_0,\dots,p_m)\in \mathcal{S}_m$ by taking $p_i = \sum_{j\le i} e_{{\sigma}_{j}}$. It is straightforward to verify that this yields a bijection between the set of permutations of $(1,\dots,m)$ and $\mathcal{S}_m$. As a consequence, the set $\mathcal{S}_m$ has $m!$ elements. 

Each $(p_0,\dots,p_m)\in \mathcal{S}_m$ satisfies the property that $p_{i+1}-p_i$ are linearly independent for  $i=0,\dots,m-1$, so the points $p_0,\dots,p_m$ span a simplex in $[0,1]^m$. Suppose $(p_0,\dots,p_m)$ corresponds to the permutation $(\sigma_1,\dots,\sigma_m)$ of $(1,\dots,m)$, then the simplex spanned by $(p_0,\dots,p_m)$ consists of all points $(x_1,\dots,x_m)\in [0,1]^m$ such that $x_{\sigma_1}\ge x_{\sigma_{2}}\ge\dots\ge x_{\sigma_{m}}$. As a result, the union of all the $m!$ simplices defined by elements in $\mathcal{S}_m$ cover $[0,1]^m$, and the intersection of each pair of simplices is the simplex spanned by their common vertices. Therefore, the simplices defined by elements in $\mathcal{S}_m$ yields a triangulation of $[0,1]^m$. We denote this triangulation by $\mathcal{T}_m$. By definition, $\mathcal{T}_m$ is a simplicial complex\footnote{It is clear that simplicial complexes can be regarded as CW complexes. Note that in the standard notation, a \emph{cell} of a CW complex is homeomorphic to an open ball, while a \emph{simplex} of a simplicial complex includes its boundary.} that is canonically homeomorphic to $[0,1]^m$. 

We prove several properties of $\mathcal{T}_m$ that will be needed later. In order the state the properties, we need to introduce some additional terminology. Recall that we may view $[0,1]$ as a CW complex with two 0-cells $\{0\}$ and $\{1\}$ and one 1-cell given by $(0,1)$. As a result, the product space $[0,1]^{m}$ admits a product CW complex structure. The product CW complex structure contains exactly one $m$-cell, so it is clearly different from $\mathcal{T}_m$. For each $m'<m$, the number of $m'$ cells of the product CW complex structure is equal to $2^{m-m'}\, {m\choose m'}$. The closure of each $m'$ cell is isometric to $[0,1]^{m'}$, and we call it an \emph{$m'$--face}. An embedding $\iota: [0,1]^{m'}\to [0,1]^m$ is called a \emph{canonical parametrization} of an $m'$--face, if there exist $m'$ coordinates of $[0,1]^m$, such that $\iota$ equals the identity map on these $m'$ coordinates, and $\iota$ equals a constant map to $\{0,1\}^{m-m'}$ on the rest  $m-m'$ coordinates. 

\begin{lemm}
	\label{lem_property_Tm}
	The triangulation $\mathcal{T}_m$ has the following properties.
	\begin{enumerate}
		\item Each face of $[0,1]^m$ is a subcomplex of $\mathcal{T}_m$. 
		\item Suppose $j: [0,1]^{m'}\to [0,1]^m$ is a canonical parametrization of an $m'$--face, then the map $i$ induces an isomorphism of simplicial complexes from $\mathcal{T}_{m'}$ to the restriction of $\mathcal{T}_m$ on $\imag(i)$. 
		\item Suppose $\varphi:[0,1]^m\to [0,1]^m$ is an isometry such that
		 \[
		\varphi(\{0,\dots,0\},\{1,\dots,1\}) = (\{0,\dots,0\},\{1,\dots,1\}),
		\] then $\varphi$ induces an automorphism of $\mathcal{T}_m$. Namely, $\varphi$ maps every simplex of $\mathcal{T}_m$ linearly homeomorphically to a simplex of $\mathcal{T}_m$.
	\end{enumerate}
\end{lemm}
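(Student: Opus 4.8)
The plan is to reduce all three parts to one combinatorial description of $\mathcal{T}_m$. Equip the vertex set $\{0,1\}^m$ with the coordinatewise partial order $\le$; I will show that a subset of $\{0,1\}^m$ is the vertex set of a simplex of $\mathcal{T}_m$ if and only if it is a chain for $\le$, and that the simplex is then its convex hull. (Equivalently, $\mathcal{T}_m$ is the order complex of the Boolean lattice $\{0,1\}^m$.) One direction is immediate from the construction: the vertices $p_0,\dots,p_m$ of $\Delta_\sigma$ satisfy $\mathbf{0}=p_0<p_1<\dots<p_m=\mathbf{1}$, so the vertex set of any face of any $\Delta_\sigma$ is a chain. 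Conversely, any chain $w_0<\dots<w_r$ can be refined to a maximal chain by flipping coordinates from $0$ to $1$ one at a time; this maximal chain is $(p_0,\dots,p_m)$ for some permutation $\sigma$, so $\mathrm{conv}(w_0,\dots,w_r)$ is a face of $\Delta_\sigma$. Affine independence of a chain follows because the increments $w_i-w_{i-1}$ are $0$--$1$ vectors with pairwise disjoint supports. I will also prove a \emph{carrier lemma}: if $x\in[0,1]^m$ and $w_0<\dots<w_r$ are the vertices of the unique simplex of $\mathcal{T}_m$ containing $x$ in its relative interior, then every $w_i$ agrees with $x$ in each coordinate where $x_\ell\in\{0,1\}$. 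This comes from writing $x=\sum_i\lambda_iw_i$ with all $\lambda_i>0$, partitioning $\{1,\dots,m\}$ into the level sets of the chain, and observing that in those level sets the value $x_\ell$ equals $1$, or $0$, or one of the partial sums $\sum_{i\ge j}\lambda_i$ with $1\le j\le r$, and that these partial sums lie strictly between $0$ and $1$.

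Granting this, part (1) is short: an $m'$--face $F$ is defined by fixing some coordinates to $0$ and others to $1$, so for any $x\in F$ the carrier lemma forces every vertex of the carrier of $x$ into $F$, whence the whole carrier lies in $F$ by convexity; thus $F$ is a union of relatively open simplices of $\mathcal{T}_m$, and being closed it is a subcomplex.

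For part (2), after relabeling coordinates I may assume $j$ is the identity on the first $m'$ coordinates and a constant $\epsilon\in\{0,1\}^{m-m'}$ on the remaining ones; then $j$ is an affine embedding onto the face $F=\image(j)$, which by (1) is a subcomplex of $\mathcal{T}_m$ triangulating $F$. Since $j$ only appends $\epsilon$, it restricts to an isomorphism of posets $(\{0,1\}^{m'},\le)\xrightarrow{\ \sim\ }(F\cap\{0,1\}^m,\le)$ on vertex sets; as the simplices on both sides are exactly the convex hulls of chains, the affine order-preserving bijection $j$ carries simplices linearly onto simplices and is a simplicial isomorphism. For part (3), a bijective self-isometry $\varphi$ of $[0,1]^m$ is affine and permutes $\{0,1\}^m$. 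If $\varphi$ fixes $\mathbf{0}$ and $\mathbf{1}$ it permutes the neighbours $e_1,\dots,e_m$ of $\mathbf{0}$ and is therefore a coordinate permutation, hence order-preserving on $(\{0,1\}^m,\le)$; if $\varphi$ swaps $\mathbf{0}$ and $\mathbf{1}$, then $x\mapsto\mathbf{1}-\varphi(x)$ fixes $\mathbf{0}$, so $\varphi$ is order-reversing. Either way $\varphi$ maps chains to chains, hence maps each simplex linearly onto the convex hull of its image chain, which is again a simplex of $\mathcal{T}_m$; the same applied to $\varphi^{-1}$ shows $\varphi$ is an automorphism.

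The real work is concentrated in the first paragraph: identifying $\mathcal{T}_m$ with the order complex of the Boolean lattice and, especially, proving the carrier lemma, whose content is that the carrier of a point ``sees'' exactly the coordinates pinned to $0$ or $1$ --- this is precisely what makes faces subcomplexes. After that, (1)--(3) are bookkeeping. One minor point to handle carefully in (3) is the justification that a self-isometry of $[0,1]^m$ is affine (for instance: it fixes the centroid and preserves metric midpoints, or one cites the standard fact that isometries of a bounded convex body extend to affine maps).
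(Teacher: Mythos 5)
Your proof is correct, but it takes a genuinely different route from the paper's for parts (1) and (2). The paper proves (1) and (2) in one stroke by directly checking that a canonical parametrization $\iota$ carries maximal chains of $\mathcal{S}_{m'}$ into consecutive subsequences of maximal chains of $\mathcal{S}_m$, so $\iota$ sends simplices of $\mathcal{T}_{m'}$ to simplices of $\mathcal{T}_m$; since these tile the face, it is a subcomplex. You instead reformulate $\mathcal{T}_m$ as the order complex of the Boolean lattice $(\{0,1\}^m,\le)$ and prove a carrier lemma --- that the carrier of a point $x$ can only involve vertices agreeing with $x$ at every coordinate pinned to $0$ or $1$ --- and deduce (1) from that; (2) then falls out because $j$ is a poset isomorphism onto $F\cap\{0,1\}^m$. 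Your approach is longer but more structural: the carrier lemma is a reusable statement that directly explains why the triangulation is compatible with the face lattice of the cube, whereas the paper's argument is a terse ad hoc verification. For (3) the two proofs coincide in substance: both reduce to identifying $\varphi$ as a coordinate permutation possibly composed with $x\mapsto\mathbf{1}-x$, and then either observe (paper) that the top simplices $\{x_{\sigma_1}\ge\dots\ge x_{\sigma_m}\}$ are permuted, or (you) that chains are mapped to chains; these are the same observation in two languages. Your parenthetical about needing to justify that a self-isometry of $[0,1]^m$ is affine is appropriate, and either of the justifications you sketch (midpoint preservation, or the standard Mazur--Ulam-type fact for bounded convex bodies) closes it.
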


\begin{proof}
	Suppose $\iota: [0,1]^{m'}\to [0,1]^m$ is a canonical parametrization of an $m'$--face. Then for each $(q_0,\dots,q_{m'})\in \mathcal{S}_{m'}$, the sequence $\iota(q_0),\dots, \iota(q_{m'})$ is a sequence of points in $\{0,1\}^m$ such that each point is obtained from the previous one by changing exactly one coordinate from $0$ to $1$. As a consequence, there exists $(p_0,\dots,p_m)\in \mathcal{S}_m$ that contains $(\iota(q_0),\dots, \iota(q_{m'}))$ as a consecutive subsequence.  This shows that for each simplex of $\mathcal{T}_{m'}$, its image under $\iota$ is a simplex of $\mathcal{T}_m$. This proves Properties (1), (2). 
	
	To prove (3), note that the top-dimensional simplices of $\mathcal{T}_m$ are given by 
	\[
	\{(x_1,\dots,x_m)\in [0,1]^m\mid x_{\sigma_1}\ge x_{\sigma_2}\ge \dots \ge x_{\sigma_m}\}
	\]
	for permutations $(\sigma_1,\dots,\sigma_m)$ of $(1,\dots,m)$. Since 
	\[
	\varphi(\{0,\dots,0\},\{1,\dots,1\}) = (\{0,\dots,0\},\{1,\dots,1\}),
	\]
	the map $\varphi$ is either a permutation on the coordinates, or a permutation of coordinates composed with the map that takes $(x_1,\dots,x_m)$ to $(1-x_1,\dots,1-x_m)$. In both cases, the map $\varphi$ takes every $m$--dimensional simplex of $\mathcal{T}_m$ linearly homeomorphically to an $m$--dimensional simplex of $\mathcal{T}_m$. Since every lower dimensional simplex of $\mathcal{T}_m$ is a face of an $m$--dimensional simplex, this shows that $\varphi$ is an automorphism of  $\mathcal{T}_m$.
\end{proof}

Now suppose $C$ is an $m$--dimensional cube in a Euclidean space, and suppose $v,v'$ is a pair of opposite vertices of $C$. Let $\mathcal{T}_{v,v'}$ be the triangulation of $C$ defined as follows: Fix a linear homeomorphism $\varphi_{v,v'}$ from $C$ to $[0,1]^m$ such that $v,v'$ are mapped to $(0,\dots,0)$ and $(1,\dots,1)$. The triangulation $\mathcal{T}_{v,v'}$ is defined to be the unique triangulation of $C$ such that $\varphi_C$ induces an isomorphism from $\mathcal{T}_{v,v'}$ to $\mathcal{T}_m$. By Lemma \ref{lem_property_Tm} (3), we know that $\mathcal{T}_{v,v'}$ does not depend on the ordering of $v,v'$ or the choice of $\varphi_{v,v'}$. 
We also record the following property for later reference.
\begin{lemm}
	\label{lem_Tvv'_face}
	Suppose $C$ is a cube in a Euclidean space, $v,v'$ is a pair of its opposite vertices. Suppose $D$ is a face of $C$, and $w,w'$ are the images of the orthogonal projections of $v,v'$ to $D$. Let  $\mathcal{T}_{v,v'}, \mathcal{T}_{w,w'}$ be the triangulations of $C, D$ defined as above. Then the restriction of $\mathcal{T}_{v,v'}$ to $D$ is equal to $\mathcal{T}_{w,w'}$.
\end{lemm}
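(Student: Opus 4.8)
The plan is to transport the whole picture to the model cube $[0,1]^m$ and then read off the claim from parts (2) and (3) of Lemma~\ref{lem_property_Tm}. First I would reduce to the case $C=[0,1]^m$ with $v=(0,\dots,0)$ and $v'=(1,\dots,1)$. Any ``linear homeomorphism'' $\varphi_{v,v'}\colon C\to[0,1]^m$ between cubes, in the sense used here, is automatically an affine similarity: its linear part sends the $m$ mutually orthogonal edge vectors at a vertex of $C$, of common length $\ell$, to $\ell^{-1}$ times the standard basis vectors up to order and sign, so it is $\ell^{-1}$ times an orthogonal map. Affine similarities carry faces to faces, commute with orthogonal projection (up to the irrelevant scaling factor), and transport $\mathcal{T}_{v,v'}$, $\mathcal{T}_{w,w'}$, and the restriction $\mathcal{T}_{v,v'}|_D$ in the expected way straight from their definitions, the well-definedness of these triangulations being exactly Lemma~\ref{lem_property_Tm}(3). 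Hence it suffices to treat the model case, where $\mathcal{T}_{v,v'}=\mathcal{T}_m$.

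In the model case, $D$ is cut out by fixing the coordinates in some subset $S\subseteq\{1,\dots,m\}$ to values $\epsilon_i\in\{0,1\}$. A one-line computation identifies the orthogonal projection of $v=(0,\dots,0)$ onto $D$ as the vertex $w$ with $w_i=\epsilon_i$ for $i\in S$ and $w_j=0$ for $j\notin S$, and likewise the projection of $v'$ as the vertex $w'$ with $w'_i=\epsilon_i$ for $i\in S$ and $w'_j=1$ for $j\notin S$; in particular $w,w'$ are a pair of opposite vertices of $D$. Then take the canonical parametrization $\iota\colon[0,1]^{m'}\to D$ that is the identity on the coordinates outside $S$ and constant equal to $(\epsilon_i)_{i\in S}$ on $S$; by construction it sends $(0,\dots,0)\mapsto w$ and $(1,\dots,1)\mapsto w'$. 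By Lemma~\ref{lem_property_Tm}(2), $\iota$ carries $\mathcal{T}_{m'}$ isomorphically onto the restriction of $\mathcal{T}_m$ to $D$, i.e.\ $\mathcal{T}_m|_D=\iota_*\mathcal{T}_{m'}$. On the other hand, by definition $\mathcal{T}_{w,w'}=\phi_*\mathcal{T}_{m'}$ for any linear homeomorphism $\phi\colon[0,1]^{m'}\to D$ carrying the pair $\{(0,\dots,0),(1,\dots,1)\}$ to $\{w,w'\}$. Now $\iota^{-1}\circ\phi$ is a linear self-homeomorphism of $[0,1]^{m'}$ --- hence an isometry, again by the rigidity remark above --- that preserves the pair $\{(0,\dots,0),(1,\dots,1)\}$, so Lemma~\ref{lem_property_Tm}(3) makes it an automorphism of $\mathcal{T}_{m'}$. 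Therefore $\phi_*\mathcal{T}_{m'}=\iota_*\mathcal{T}_{m'}$, which yields $\mathcal{T}_{w,w'}=\mathcal{T}_m|_D=\mathcal{T}_{v,v'}|_D$, as claimed.

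The only step that is not a pure unwinding of definitions is the rigidity observation underlying both the reduction to the model cube and the final application of Lemma~\ref{lem_property_Tm}(3): one needs that a linear homeomorphism between (metric) cubes is a similarity, so that orthogonal projections and the canonically defined triangulations are all transported compatibly. Once that is in hand, I expect no further obstacle.
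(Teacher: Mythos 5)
Your proof is correct and follows essentially the same route as the paper, which simply cites Lemma~\ref{lem_property_Tm}(2) as yielding the result ``immediately''; you have just spelled out the reduction to the model cube, the rigidity observation that a linear homeomorphism between metric cubes is a similarity (needed so the orthogonal projections transport correctly), and the verification that the projections land on a pair of opposite vertices of $D$. The additional appeal to Lemma~\ref{lem_property_Tm}(3) for independence of the choice of $\phi$ is the same well-definedness fact the paper records when it first defines $\mathcal{T}_{v,v'}$.
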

\begin{proof}
	This is an immediate consequence of Lemma \ref{lem_property_Tm} (2).
\end{proof}

We define a triangulation on $\mathbb{R}^{2k+1}$ as follows. First, write $\mathbb{R}^{2k+1}$ as the union of cubes given by translations of $[0,1/2]^n$ by vectors in $(\frac12\mathbb{Z})^n$. For each such cube, there exist a unique vertex $v\in \mathbb{Z}^n$ and a unique vertex $v'\in \mathbb{Z}^n + (1/2,\dots,1/2)$. Moreover, $v,v'$ are opposite vertices of the cube. Triangulate the cube by $\mathcal{T}_{v,v'}$. By Lemma \ref{lem_Tvv'_face}, when two such cubes intersect at a face, the restrictions on the intersection are the same. Therefore, the triangulations of cubes glue together to define a triangulation of $\mathbb{R}^{2k+1}$. We denote this triangulation of $\RR^{2k+1}$ by $\mathcal{T}$. 

To introduce the definition of $X_0$, we recall the following terminologies from PL topology.

\begin{defi}
	Suppose $\mathcal{C}$ is a simplicial complex, and $\mathcal{C}_0$ is a subcomplex. 
	\begin{enumerate}
		\item The \emph{simplicial neighborhood} of $\mathcal{C}_0$ in $\mathcal{C}$ is the union of all simplices of $\mathcal{C}$ that have non-trivial intersections with $\mathcal{C}_0$. 
		\item We say that $\mathcal{C}_0$ is \emph{full} in $\mathcal{C}$, if for every simplex $s$ such that the vertices of $s$ are all contained in $\mathcal{C}_0$, we have that $s$ is contained in $\mathcal{C}_0$. 
	\end{enumerate}
\end{defi}

By definition, simplicial neighborhoods are subcomplexes. Note that the concepts of \emph{simplicial neighborhoods} and \emph{full subcomplexes} depend not only on the underlying topological spaces but also on the triangulations.

The following result follows from the proof of the existence of regular neighborhoods in PL topology. See, for example, \cite[Theorem 2.11(1)]{hudson1969piecewise}.
\begin{prop}
	\label{prop_regular_nbhd}
	Suppose $\mathcal{C}$ is a PL manifold and $\mathcal{C}_0$ is a full subcomplex. Let $\mathcal{C}', \mathcal{C}_0'$ be the barycentric subdivisions of $\mathcal{C}, \mathcal{C}_0$ respectively. Then the simplicial neighborhood of $\mathcal{C}_0'$ in $\mathcal{C}'$ is a regular neighborhood of $\mathcal{C}_0'$.
\end{prop}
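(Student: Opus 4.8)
The plan is to verify the three defining properties of a regular neighborhood for $N:=N(\mathcal{C}_0',\mathcal{C}')$: that $N$ is a polyhedral neighborhood of $|\mathcal{C}_0|$ in $\mathcal{C}$, that $N$ collapses to $\mathcal{C}_0'$, and that $N$ is a PL submanifold of $\mathcal{C}$ of the same dimension $n=\dim\mathcal{C}$ (with boundary). The first of these needs neither hypothesis and I would dispatch it immediately: any point of $|\mathcal{C}_0'|$ lies in the relative interior of a unique simplex $\tau$ of $\mathcal{C}_0'$, and the open star of $\tau$ in $\mathcal{C}'$ is an open subset of $|\mathcal{C}|$ that contains this point and lies in $N$, since every simplex of $\mathcal{C}'$ having $\tau$ as a face meets $|\mathcal{C}_0'|$; the union of these open stars is an open neighborhood of $|\mathcal{C}_0'|=|\mathcal{C}_0|$ inside $N$.

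For the other two properties I would first record the combinatorics produced by the subdivision. A simplex of $\mathcal{C}'$ is a chain $\hat{\sigma}_0\cdots\hat{\sigma}_r$ of barycenters of a flag $\sigma_0<\cdots<\sigma_r$ of simplices of $\mathcal{C}$; since $\mathcal{C}_0$ is a subcomplex one checks that this simplex belongs to $N$ exactly when its minimal member $\sigma_0$ has a vertex in $\mathcal{C}_0$, and it belongs to $\mathcal{C}_0'$ exactly when all $\sigma_i\in\mathcal{C}_0$. Collapsibility $N\searrow\mathcal{C}_0'$ is then the classical collapsing lemma for derived neighborhoods of full subcomplexes (if $L$ is full in $K$ then $N(L',K')\searrow L'$), applied with $L=\mathcal{C}_0$, $K=\mathcal{C}$; concretely one peels off the simplices of $N$ not lying in $\mathcal{C}_0'$ by elementary collapses organized by the ``non-$\mathcal{C}_0$ part'' of the defining flag. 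It is exactly here that the fullness of $\mathcal{C}_0$ is genuinely used: without it $N$ can be far too large — for instance, if $\mathcal{C}$ is a simplex and $\mathcal{C}_0$ is its boundary, then $N$ is all of $\mathcal{C}$ and does not collapse to $\mathcal{C}_0'$.

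The main obstacle is the manifold property, and this is where the PL manifold hypothesis together with the passage to the barycentric subdivision are both essential — for a full subcomplex of an \emph{un}-subdivided triangulated manifold the simplicial neighborhood need not be a manifold at all (two non-adjacent vertices of a triangulated disc give two discs joined at a point, which also fails to collapse to the two vertices). I would argue locally: a point $x\in|N|$ lies in the relative interior of a unique simplex $\tau$ of $\mathcal{C}'$ lying in $N$; a neighborhood of $x$ in $|\mathcal{C}|$ decomposes along the join $\tau\ast\mathrm{lk}(\tau,\mathcal{C}')$, and because $\mathcal{C}$ is a PL $n$-manifold the link $\mathrm{lk}(\tau,\mathcal{C}')$ is a PL sphere — or a PL ball, if $\tau$ meets $\partial\mathcal{C}$. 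Intersecting this local decomposition with $N$ — using the description of $N$ above to determine which cofaces of $\tau$ survive, with the ``inner'' face of $\sigma_0$ spanned by its vertices in $\mathcal{C}_0$ dictating the shape — one checks that $x$ has a neighborhood in $N$ PL-homeomorphic to $\RR^n$ or to $\RR^n_{\ge 0}$, so $N$ is a PL $n$-submanifold of $\mathcal{C}$. Granting the three properties, $N$ is by definition a regular neighborhood of $\mathcal{C}_0'$; since all of this is precisely the content of the standard construction of regular neighborhoods, in the write-up one may instead simply invoke \cite[Theorem 2.11(1)]{hudson1969piecewise}.
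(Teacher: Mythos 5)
Your proof is correct and takes essentially the same route as the paper, which gives no proof itself but simply cites \cite[Theorem 2.11(1)]{hudson1969piecewise}; your sketch correctly unpacks what that reference contains (the three-part verification of neighborhood, collapsibility via fullness, and the manifold property via links after barycentric subdivision). A tiny wording quibble: the link $\mathrm{lk}(\tau,\mathcal{C}')$ is a ball precisely when the simplex $\tau$ lies in $\partial\mathcal{C}$, not merely when $\tau$ ``meets'' $\partial\mathcal{C}$ --- a simplex with only a proper face in the boundary still has a spherical link --- but this does not affect the argument.
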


By definition, if $P$ is a polygon in a PL manifold $M$, then $N\subset M$ is called a \emph{regular neighborhood} of $P$ in $M$, if $N$ is a closed neighborhood of $P$ that is a codimension-0 PL submanifold of $M$ and collapses to $P$ (for the definition of \emph{collapsing}, see \cite[Section II.1]{hudson1969piecewise}). Every pair of regular neighborhoods of $P$ are related to each other by a PL isotopy relative to $P\cup \partial M$ (see \cite[Theorem 1.6.4(3)]{rushing1973topological}).

In order to apply Proposition \ref{prop_regular_nbhd} in our construction, we prove the following lemma about $Z$ and $Z'$.
\begin{lemm}
	\label{lem_Z_full}
	Let $Z,Z'\subset \RR^{2k+1}$ be as above. Then $Z, Z'$ are both full with respect to the triangulation $\mathcal{T}$. 
\end{lemm}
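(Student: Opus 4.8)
The plan is to reduce the assertion to a combinatorial statement about the single model triangulation $\mathcal{T}_n$ (here $n=2k+1$), localized on one of the half-unit cubes used to build $\mathcal{T}$. First, it suffices to treat $Z$: the translation by $(\tfrac12,\dots,\tfrac12)$ carries $Z$ to $Z'$, and it preserves $\mathcal{T}$, since it permutes the half-unit cubes of the decomposition while exchanging the sets $\ZZ^n$ and $\ZZ^n+(\tfrac12,\dots,\tfrac12)$, and $\mathcal{T}_{v,v'}$ depends only on the cube together with the unordered pair of its integer/half-integer vertices (Lemma \ref{lem_property_Tm}(3)). Next, every simplex of $\mathcal{T}$ is a simplex of $\mathcal{T}_{v,v'}$ for at least one half-unit cube $C$, hence is contained in such a $C$; so it is enough to prove that for every $C$ the set $Z\cap C$ is a \emph{full} subcomplex of $\mathcal{T}|_C=\mathcal{T}_{v,v'}$, and then assemble these local statements using the compatibility on common faces from Lemma \ref{lem_Tvv'_face}.

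Fix $C$ with integer vertex $v\in\ZZ^n$ and half-integer vertex $v'\in\ZZ^n+(\tfrac12,\dots,\tfrac12)$, and let $\varphi=\varphi_{v,v'}\colon C\to[0,1]^n$ be an affine identification sending $v\mapsto(0,\dots,0)$, $v'\mapsto(1,\dots,1)$, so that $\varphi$ pulls $\mathcal{T}_n$ back to $\mathcal{T}_{v,v'}$. On $C$ the $i$-th coordinate $x_i$ runs over the interval with endpoints the integer $v_i$ and the half-integer $v_i\pm\tfrac12$, so $x_i\in\ZZ$ exactly when $x_i=v_i$, i.e. exactly when the $i$-th coordinate $y_i$ of $y=\varphi(x)$ equals $0$ (the sign reversals that $\varphi$ may introduce on individual coordinates are harmless, as only the vanishing locus matters). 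Since a point lies in $Z$ iff it has at most $k$ non-integer coordinates, and $n-k=k+1$, we get $\varphi(Z\cap C)=W:=\{y\in[0,1]^n:\#\{i:y_i=0\}\ge k+1\}$. Thus the claim reduces to: $W$ is a full subcomplex of $\mathcal{T}_n$.

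To see this, recall from the construction of $\mathcal{T}_n$ that every simplex is the convex hull of a chain $q_0<q_1<\dots<q_r$ of $\{0,1\}$-vectors in the coordinatewise order (a subset of one of the maximal chains indexing $\mathcal{S}_n$), and that a vertex $q$ lies in $W$ precisely when it has at most $k$ coordinates equal to $1$. Suppose all vertices $q_0,\dots,q_r$ of such a simplex lie in $W$; in particular the top vertex $q_r$ has at most $k$ ones. For a convex combination $x=\sum_j t_j q_j$ with $t_j\ge 0$, let $j^\ast$ be the largest index with $t_{j^\ast}>0$; since the $q_j$ increase and have nonnegative entries, $x_i>0$ iff $(q_{j^\ast})_i=1$, so $\#\{i:x_i\ne 0\}$ equals the number of ones of $q_{j^\ast}$, which is $\le$ that of $q_r$, hence $\le k$; thus $x\in W$. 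This proves $W$ is full. Applying the same computation to the unique simplex of $\mathcal{T}_n$ containing a given point of $W$ in its relative interior shows that $W$ is a union of simplices of $\mathcal{T}_n$, hence a subcomplex; transporting back through $\varphi$ and unioning over the half-unit cubes $C$ then shows $Z$, and therefore $Z'$, is a full subcomplex of $\mathcal{T}$.

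There is no serious obstacle here; the only points requiring care are bookkeeping ones — that the affine identifications $\varphi_{v,v'}$ may reverse individual coordinates, and that one must invoke compatibility on the faces shared by adjacent half-unit cubes before passing from the local statement on each $C$ to the global one — both of which are already handled by Lemmas \ref{lem_property_Tm} and \ref{lem_Tvv'_face}.
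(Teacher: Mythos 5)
Your proof is correct and takes essentially the same approach as the paper's: reduce to a single (half-unit) cube, identify $Z$ there with the set of points having at least $k+1$ vanishing coordinates, and use the chain structure of the vertices of a $\mathcal{T}_n$-simplex to show that if the top vertex has $\ge k+1$ zero coordinates then the whole simplex does. Your version is more explicit about the localization to cubes, the subcomplex property, and the $Z'$ case via the $(\tfrac12,\dots,\tfrac12)$-translation symmetry, but the core combinatorial argument is the same.
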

\begin{proof}
	Define $Z_0\subset[0,1]^{2k+1}$ to be the union of all faces of $[0,1]^{2k+1}$ with dimensions $\le k$ and containing $(0,\dots,0)$. We only need to show that $Z_0$ is full in $[0,1]^{2k+1}$ with respect to $\mathcal{T}_{2k+1}$. 
	
	The set $Z_0$ consists of all points in $[0,1]^{2k+1}$ such that at least $k+1$ coordinates are equal to $0$. 
	Suppose $p_0,\dots,p_s$ are the vertices of a simplex of $\mathcal{T}_{2k+1}$. By the definition of $\mathcal{T}_{2k+1}$, we have $p_i \in \{0,1\}^{2k+1}$ for all $i$, and we can order the vertices so that each coordinate of $p_{i+1}$ is no less than the corresponding coordinate of $p_{i}$.  If $p_0,\dots,p_s$ are all contained in $Z_0$, then $p_s$ has at least $k+1$ coordinates that are equal to $0$, so $p_0,\dots,p_s$ all have zero values on these coordinates. Therefore, the simplex spanned by $p_0,\dots,p_s$ is contained in $Z_0$. 
\end{proof}

\begin{defi}
	Let $\mathcal{T}'$ be the barycentric subdivision of $\mathcal{T}$. 
	Let $N(Z)\subset \RR^{2k+1}$ be the simplicial neighborhood of $Z$ with respect to $\mathcal{T}'$. Let $N(Z')\subset \RR^{2k+1}$ be the simplicial neighborhood of $Z'$ with respect to $\mathcal{T}'$.
\end{defi}

By Lemma \ref{lem_Z_full} and Proposition \ref{prop_regular_nbhd}, we know that $N(Z), N(Z')$ are regular neighborhoods of $Z, Z'$. In particular, the boundaries of $N(Z)$ and $N(Z')$ are PL submanifolds of $\RR^{2k+1}$. We now show that $N(Z)$ and $N(Z')$ have the same boundary. 

\begin{lemm}
	\label{lem_ZZ'_dual_in_T}
	Suppose $p_0,\dots,p_{2k+1}$ are the vertices of a $(2k+1)$--simplex of $\mathcal{T}$. Then there exists an ordering of the vertices such that $p_0,\dots,p_{k}$ are contained in $Z$, and $p_{k+1},\dots,p_{2k+1}$ are contained in $Z'$.
\end{lemm}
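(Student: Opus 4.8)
The plan is to record the combinatorial type of $Z$ and $Z'$ at the level of vertices of $\mathcal{T}$, and then to observe that every $(2k+1)$-simplex of $\mathcal{T}$ interpolates between them in a tightly controlled way.

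First I would identify the vertices of $\mathcal{T}$. Each cube used to build $\mathcal{T}$ is a translate of $[0,1/2]^{2k+1}$ by a vector in $(\tfrac12\ZZ)^{2k+1}$, and the union of the vertex sets of all these cubes is exactly $(\tfrac12\ZZ)^{2k+1}$. For $w\in(\tfrac12\ZZ)^{2k+1}$, let $I(w)$ be the set of indices at which $w$ is an integer and $H(w)$ the set of indices at which $w$ lies in $\ZZ+\tfrac12$; these partition $\{1,\dots,2k+1\}$. Directly from the definition of $Z$ (at most $k$ non-integer coordinates) we get $w\in Z\iff |H(w)|\le k\iff |I(w)|\ge k+1$, and since $Z'=Z+(\tfrac12,\dots,\tfrac12)$, also $w\in Z'\iff |I(w)|\le k\iff |H(w)|\ge k+1$. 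As $|I(w)|+|H(w)|=2k+1$ is odd, exactly one of these holds; in particular every vertex of $\mathcal{T}$ lies in exactly one of $Z$, $Z'$.

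Next I would analyze a $(2k+1)$-simplex $\sigma$ of $\mathcal{T}$. Since the cubes meet only along faces, $\sigma$ is a top-dimensional simplex of the triangulation $\mathcal{T}_{v,v'}$ of a single cube $C$, where $v$ is the unique integer vertex of $C$ and $v'$ the unique half-integer vertex, opposite in $C$. Fixing the linear homeomorphism $\varphi_{v,v'}\colon C\to[0,1]^{2k+1}$ with $v\mapsto(0,\dots,0)$ and $v'\mapsto(1,\dots,1)$, the image $\varphi_{v,v'}(\sigma)$ is a top-dimensional simplex of $\mathcal{T}_{2k+1}$, hence corresponds to an element of $\mathcal{S}_{2k+1}$ via the bijection with permutations described before Lemma \ref{lem_property_Tm}. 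Pulling this back, the vertices of $\sigma$ admit an ordering $p_0=v,p_1,\dots,p_{2k+1}=v'$ in which each $p_{j+1}$ is obtained from $p_j$ by changing exactly one coordinate from its $v$-value (an integer) to its $v'$-value (a half-integer). Hence $|H(p_j)|=j$ and $|I(p_j)|=2k+1-j$ for all $j$.

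Combining the two steps, $p_j\in Z\iff 2k+1-j\ge k+1\iff j\le k$, while $p_j\in Z'\iff 2k+1-j\le k\iff j\ge k+1$. Therefore $p_0,\dots,p_k\in Z$ and $p_{k+1},\dots,p_{2k+1}\in Z'$, which is precisely the asserted ordering. I do not expect a genuine obstacle here; the only point requiring care is the claim that the vertices of $\sigma$ form a chain from $v$ to $v'$ flipping a single coordinate at each step, which is exactly the content of the definition of $\mathcal{T}_{v,v'}$ through $\mathcal{S}_{2k+1}$, together with the fact (Lemma \ref{lem_property_Tm}(3)) that $\mathcal{T}_{v,v'}$ is independent of the chosen identification $\varphi_{v,v'}$.
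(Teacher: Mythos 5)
Your proof is correct and follows essentially the same approach as the paper: reduce to a single cube via the identification $\varphi_{v,v'}$, use that the vertices of a top simplex of $\mathcal{T}_{2k+1}$ form a chain flipping one coordinate at a time, and count integer versus half-integer coordinates to decide membership in $Z$ or $Z'$. You merely make the cube-reduction and the vertex characterization ($|I(w)|\ge k+1$) more explicit, and add the (optional) observation that oddness of $2k+1$ forces every vertex into exactly one of $Z$, $Z'$.
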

\begin{proof}
	Define $Z_0\subset[0,1]^{2k+1}$ to be the union of all faces of $[0,1]^{2k+1}$ with dimensions $\le k$ and containing $(0,\dots,0)$. 
	Define $Z_0'\subset [0,1]^{2k+1}$ to be the union of all faces with dimensions $\le k$ and containing $(1,\dots,1)$. 
	We only need to show that if $p_0,\dots,p_{2k+1}$ are the vertices of a $(2k+1)$--simplex of $\mathcal{T}_{2k+1}$, then there exists an ordering such that $p_0,\dots,p_{k}$ are contained in $Z_0$, and $p_{k+1},\dots,p_{2k+1}$ are contained in $Z_0'$.
	
	By the definition of $\mathcal{T}_{2k+1}$, one may order $p_0,\dots,p_{k+1}$ so that $p_{i+1}$ is obtained from $p_i$ by changing exactly one coordinate from $0$ to $1$. Note that a point $p\in [0,1]^{2k+1}$ is contained in $Z_0$ if and only if at least $k+1$ of the coordinates of $p$ are equal to $0$, and that $p$ is contained in $Z_0'$ if and only if at least $k+1$ of the coordinates of $p$ are equal to $1$. As a result, it is clear that $p_0,\dots,p_k$ are contained in $Z_0$, and $p_{k+1},\dots,p_{2k+1}$ are contained in $Z_0'$.
\end{proof}

\begin{lemm}
	$\partial N(Z) = \partial N(Z')$.
\end{lemm}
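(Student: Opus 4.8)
The plan is to describe both simplicial neighbourhoods explicitly inside the barycentric subdivision $\mathcal{T}'$, observe that together they cover $\RR^{2k+1}$ while the manifold interior of one is disjoint from the other, and then read off $\partial N(Z)=N(Z)\cap N(Z')=\partial N(Z')$. To set up, I first note that $Z$ and $Z'$ play almost symmetric roles with respect to $\mathcal{T}$: by Lemma~\ref{lem_ZZ'_dual_in_T} every vertex of $\mathcal{T}$ (each being a vertex of some $(2k+1)$--simplex of $\mathcal{T}$) lies in $Z\cup Z'$, while $Z\cap Z'=\varnothing$ since a point with $\ge k+1$ integer coordinates cannot simultaneously have $\ge k+1$ coordinates in $\ZZ+\tfrac12$. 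Using that $Z$ and $Z'$ are full in $\mathcal{T}$ (Lemma~\ref{lem_Z_full}), each simplex $\sigma$ of $\mathcal{T}$ is then of exactly one of three kinds: all vertices in $Z$ (equivalently $\sigma\subseteq Z$, equivalently $\sigma$ has no vertex in $Z'$), all vertices in $Z'$, or $\sigma$ has a vertex in $Z$ and a vertex in $Z'$.

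Next I would identify $N(Z)$ combinatorially. Recall the vertices of $\mathcal{T}'$ are the barycentres $\hat\sigma$ of simplices $\sigma$ of $\mathcal{T}$, and the simplices of $\mathcal{T}'$ are $[\hat\sigma_0,\dots,\hat\sigma_j]$ for chains $\sigma_0\subsetneq\dots\subsetneq\sigma_j$ in $\mathcal{T}$. I claim $N(Z)$ is the full subcomplex of $\mathcal{T}'$ spanned by $W_Z:=\{\hat\sigma:\sigma\text{ has a vertex in }Z\}$. Indeed, a simplex of $\mathcal{T}'$ meets (the subdivision of) $Z$ iff its minimal member $\sigma_0$ satisfies $\sigma_0\subseteq Z$ — here I use fullness of $Z$ — and then every $\sigma_i\supseteq\sigma_0$, hence every vertex of that simplex and of its faces, lies in $W_Z$; conversely, given a chain with each $\sigma_i$ having a vertex in $Z$, choosing such a vertex $v$ of $\sigma_0$ and prepending $\{v\}$ exhibits $[\hat\sigma_0,\dots,\hat\sigma_j]$ as a face of a simplex of $\mathcal{T}'$ that meets $Z$. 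The same holds for $N(Z')$ with $W_{Z'}$. Since every simplex of $\mathcal{T}$ has a vertex, and that vertex lies in $Z$ or $Z'$, the minimal member $\sigma_0$ of the $\mathcal{T}'$--carrier of any point has a vertex in $Z$ or in $Z'$; because $N(Z)$ is full, this forces that carrier into $N(Z)\cup N(Z')$, so $N(Z)\cup N(Z')=\RR^{2k+1}$.

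Then I would pin down the manifold interior. For $x\in\RR^{2k+1}$ let $[\hat\sigma_0,\dots,\hat\sigma_j]$ be its carrier in $\mathcal{T}'$. Because $N(Z)$ is a full subcomplex, $x\in\inte N(Z)$ iff the open star of $x$ in $\mathcal{T}'$ lies in $N(Z)$, iff every simplex of $\mathcal{T}$ occurring in some chain refining $\sigma_0\subsetneq\dots\subsetneq\sigma_j$ lies in $W_Z$; such a simplex is comparable to $\sigma_0$, hence is either a face of $\sigma_0$ or contains $\sigma_0$, and one checks this condition is equivalent to $\sigma_0\subseteq Z$. In particular, if $x\in N(Z')$ then $\sigma_0$ has a vertex in $Z'$, hence a vertex outside $Z$, so $\sigma_0\not\subseteq Z$ and $x\notin\inte N(Z)$. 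Thus $\inte N(Z)\cap N(Z')=\varnothing$, and symmetrically $\inte N(Z')\cap N(Z)=\varnothing$.

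Finally, since $N(Z)$ is a regular neighbourhood (Lemma~\ref{lem_Z_full}, Proposition~\ref{prop_regular_nbhd}) it is a codimension-$0$ PL submanifold, so $\partial N(Z)=N(Z)\setminus\inte N(Z)$ (the topological frontier in $\RR^{2k+1}$). If $x\in\partial N(Z)$, every neighbourhood of $x$ meets $\RR^{2k+1}\setminus N(Z)\subseteq N(Z')$, and $N(Z')$ is closed, so $x\in N(Z')$; hence $\partial N(Z)\subseteq N(Z)\cap N(Z')$. Conversely $N(Z)\cap N(Z')$ is disjoint from $\inte N(Z)$ by the previous step, so $N(Z)\cap N(Z')\subseteq N(Z)\setminus\inte N(Z)=\partial N(Z)$. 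Therefore $\partial N(Z)=N(Z)\cap N(Z')$, and the symmetric argument gives $\partial N(Z')=N(Z')\cap N(Z)$, so $\partial N(Z)=\partial N(Z')$. The step I expect to be most delicate is the full-subcomplex identification of $N(Z)$ — in particular tracking faces, since a simplex of $\mathcal{T}'$ can lie in $N(Z)$ only as a face of a larger simplex that meets $Z$ — after which the conclusion is a short point-set argument using only $N(Z)\cup N(Z')=\RR^{2k+1}$ and the disjointness of each interior from the opposite neighbourhood.
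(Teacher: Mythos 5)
Your proof is correct and rests on the same combinatorial core as the paper's: in both, membership in $N(Z)$ and its interior is read off the minimal member $\sigma_0$ of the $\mathcal{T}'$--carrier, and Lemma~\ref{lem_ZZ'_dual_in_T} supplies the key dual fact that every vertex of $\mathcal{T}$ lies in exactly one of $Z$, $Z'$. The difference is one of packaging. The paper proceeds simplex-by-simplex over the top simplices $s$ of $\mathcal{T}$, directly describing which barycentric chains $t_1<\dots<t_u$ span simplices of $\partial N(Z)\cap s$ (those with $t_1\notin Z$ and $t_1\notin Z'$) and observing the criterion is symmetric in $Z,Z'$. You instead identify $N(Z)$ and $N(Z')$ globally as full subcomplexes of $\mathcal{T}'$ spanned by $W_Z$, $W_{Z'}$, extract the structural statements $N(Z)\cup N(Z')=\RR^{2k+1}$ and $\inte N(Z)\cap N(Z')=\inte N(Z')\cap N(Z)=\emptyset$, and close with a point-set argument giving $\partial N(Z)=N(Z)\cap N(Z')=\partial N(Z')$. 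Your organization makes the covering-with-common-boundary picture of the two regular neighborhoods explicit, which the paper never states; the trade-off is the somewhat more technical full-subcomplex identification step, exactly the one you flagged as delicate.

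One small misplacement: the equivalence ``$[\hat\sigma_0,\dots,\hat\sigma_j]$ meets $Z$ iff $\sigma_0\subseteq Z$'' does not actually need fullness of $Z$ — it only needs that $Z$ is a subcomplex of $\mathcal{T}$, since any point of that $\mathcal{T}'$--simplex lying in $Z$ is in the open interior of some $\sigma_i\supseteq\sigma_0$, forcing $\sigma_i$ and hence $\sigma_0$ into $Z$. Fullness of $Z$ is genuinely used, but later, when you convert ``every vertex of $\sigma_0$ lies in $Z$'' into ``$\sigma_0\subseteq Z$'' in the interior characterization; so the ingredient is present, just not at the spot where you parenthetically invoke it.
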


\begin{proof}
	Let $s$ be a $(2k+1)$--simplex of $\mathcal{T}$. We use Lemma \ref{lem_ZZ'_dual_in_T} to show that $\partial N(Z)\cap s=\partial N(Z')\cap s$. Since $\RR^{2k+1}$ equals the union of all $(2k+1)$--simplices of $\mathcal{T}$, the desired result follows. 
	
	The vertices of the barycentric subdivision of $s$ are given by the barycenters of the faces of $s$. Suppose $t_1, t_2$ are the barycenters of $s_1,s_2$, where $s_1,s_2$ are faces of $s$, we write ``$t_1<t_2$'' if $s_1$ is a proper subset of $s_2$; we write ``$t_1\le t_2$'' if $t_1<t_2$ or $t_1=t_2$. Then each sequence of barycenters of the form  $t_1<t_2<\dots<t_u$ corresponds to a simplex of the barycentric subdivision of $s$. A simplex given by $t_1<\dots< t_u$ is contained in $Z$ if and only if $t_u\in Z$; it has a non-trivial intersection with $Z$ if and only if $t_1\in Z$; it is contained in $N(Z)$ if and only if there exists $t_0\le t_1$ such that $t_0\in Z$. 
	
	By Lemma \ref{lem_ZZ'_dual_in_T}, suppose $t_1$ is the barycenter of a face of $s$, there exists $t_0\le t_1$ such that $t_0\in Z$ if and only if $t_1\notin Z'$. 
	As a result, the simplex spanned by $t_1<\dots< t_u$ is contained in $\partial N(Z)\cap s$ if and only if $t_1\notin Z$ and $t_1\notin Z'$. Similarly, the same characterization holds for $\partial N(Z')\cap s$. Hence $N(Z)\cap s = N(Z')\cap s$. 
\end{proof}

We define $X_0 = \partial N(Z) = \partial N(Z')$. By the definition of regular neighborhoods, we know that $X_0$ is a PL submanifold of $\RR^{2k+1}$. It is also clear from the definition that $X_0$ is invariant under translations by vectors in $\ZZ^{2k+1}$ and $\ZZ^{2k+1}+(\frac12,\dots,\frac12)$.

Fix an integer $L>0$, let 
\[
v_1 = (L,0,\cdots, 0), v_2 = (0,L,0,\cdots,0), \cdots, v_{2k} = (0,\cdots, 0, L, 0),
\]
and
\[
v_{2k+1} = (\tfrac12, \tfrac12, \cdots, \tfrac12, \tfrac12 + L).
\]
Let $G$ be the (free) subgroup of $\RR^{2k+1}$ generated by $v_1,\dots,v_{2k+1}$. Then $G$ acts on $\RR^{2k+1}$ by translations and $X_0$ is an invariant subset of the $G$--action. 
Define 
\[
X = X_0/G.
\]

Note that the translations by $v_1,\dots,v_{2k}$ preserve $Z,Z'$ while the translation by $v_{2k+1}$ swaps $Z$ and $Z'$. So the translations by $v_1,\dots,v_{2k}$ preserve the orientation of $X_0$ and the translation by $v_{2k+1}$ reverses the orientation of $X_0$.
Let $\hat G$ be the (free) subgroup of $\RR^{2k+1}$ generated by $v_1,\dots,v_{2k}, 2v_{2k+1}$. Then the orientation double cover of $X$ is $X/\hat G$. Let $\hat X$ denote the orientation double cover of $X$.  

We record the following properties of $X$ and $\hat X$ for later reference. 
\begin{lemm}
	\label{lem_hatX_PSC}
	Suppose $k=2$ or $3$ so that both $X$ and $\hat X$ have unique smoothings. Then the smooth manifold $\hat X$ admits a PSC metric.
\end{lemm}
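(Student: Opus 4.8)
The plan is to realize $\hat X$ as the boundary of a regular neighborhood of a $k$-dimensional subcomplex sitting in codimension $k+1\ge 3$ inside a closed flat manifold, to present that boundary as the outcome of finitely many surgeries of codimension $\ge 3$ performed on a disjoint union of round spheres, and then to invoke the Gromov--Lawson--Schoen--Yau surgery theorem.

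\emph{Setting up the ambient picture.} The group $\hat G\le\RR^{2k+1}$ generated by $v_1,\dots,v_{2k},2v_{2k+1}$ consists of integer translations, since $2v_{2k+1}=(1,\dots,1,1+2L)\in\ZZ^{2k+1}$; hence it acts freely, properly discontinuously, cocompactly, and simplicially on $(\RR^{2k+1},\mathcal T')$, preserving the full subcomplex $Z$. Consequently $Y:=\RR^{2k+1}/\hat G$ is a closed flat $(2k+1)$--manifold, $\overline Z:=Z/\hat G$ is a $k$-dimensional full subcomplex of the quotient triangulation $\mathcal T'/\hat G$ of $Y$, and $N:=N(Z)/\hat G$ is a regular neighborhood of $\overline Z$ in $Y$ (Proposition \ref{prop_regular_nbhd} descends to the free quotient) with $\partial N = X_0/\hat G = \hat X$. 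In particular $\overline Z$ has codimension $k+1$ in $Y$.

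\emph{A handle decomposition and the surgery description of $\hat X$.} I would build $N$ up over the skeleta of $\overline Z$. A regular neighborhood of the $0$--skeleton $\overline Z^{(0)}$ is a disjoint union of $(2k+1)$--balls, with boundary a disjoint union of round spheres $S^{2k}$. For $j=1,\dots,k$, passing from a regular neighborhood of $\overline Z^{(j-1)}$ to one of $\overline Z^{(j)}$ attaches, for each $j$--simplex of $\overline Z$ not already in $\overline Z^{(j-1)}$, a handle $D^{j}\times D^{2k+1-j}$ glued along $\partial D^{j}\times D^{2k+1-j}$; this is standard in PL regular neighborhood theory (see, e.g., \cite{hudson1969piecewise,rushing1973topological}). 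Thus $N$ admits a PL handle decomposition with handles of index $\le k$. Reading off the effect on the boundary, $\hat X=\partial N$ is obtained from a disjoint union of copies of $S^{2k}$ by a finite sequence of surgeries, one per handle: a $j$--handle triggers a surgery removing $S^{j-1}\times D^{2k+1-j}$ and gluing in $D^{j}\times S^{2k-j}$ along a sphere with trivial normal bundle, i.e.\ a surgery of codimension $2k-(j-1)=2k-j+1\ge k+1\ge 3$, where we use $k\ge 2$.

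\emph{Conclusion.} Since $\dim\hat X = 2k\le 6$ and $\dim Y = 2k+1\le 7$, the PL and smooth categories coincide in this range, so the handle decomposition of $N$ may be taken to be smooth; $\partial N$ with its induced smooth structure is then the unique smoothing $\hat X$, and each of the surgeries above is a genuine smooth surgery of codimension $\ge 3$. The round metric on the initial disjoint union of $2k$--spheres has positive scalar curvature, and by the Gromov--Lawson--Schoen--Yau surgery theorem \cite{SY79structure} positive scalar curvature is preserved under surgeries of codimension $\ge 3$; applying this along the finite sequence of surgeries produces a PSC metric on $\hat X$. The step I expect to require the most care is the handle-decomposition claim for the regular neighborhood $N$ --- making precise that adding the simplicial $j$--cells of $\overline Z$ amounts to attaching index-$j$ handles, and checking compatibility with the unique smoothing --- after which the codimension bookkeeping and the surgery theorem finish the argument.
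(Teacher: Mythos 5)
Your proposal follows essentially the same path as the paper's own argument: realize $\hat X$ as the boundary of the regular neighborhood $N(Z)/\hat G$ of the $k$-complex $Z/\hat G$ in $\RR^{2k+1}/\hat G\cong\TT^{2k+1}$, observe that a regular neighborhood of a $k$-complex has a PL handle decomposition with handles of index $\le k$, deduce that $\hat X$ is obtained from a disjoint union of $S^{2k}$'s by surgeries along spheres of dimension $\le k-1$ (hence codimension $\ge k+1\ge 3$), and conclude by the Gromov--Lawson/Schoen--Yau surgery theorem together with uniqueness of smoothings. The only real difference is in the PL-to-smooth step: the paper stays entirely in the $2k$-dimensional boundary and notes that each attaching sphere (dimension $\le k-1$ in a $2k$-manifold) can be PL-isotoped to a smooth embedding by transversality, whereas you smooth the entire $(2k+1)$-dimensional handle decomposition of $N$ by invoking PL/smooth compatibility in dimensions $\le 7$. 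Both are valid; the paper's version is a bit more elementary since it only needs transversality for low-dimensional spheres rather than the general smoothing theory for PL handlebodies. (Also, the surgery theorem you cite as \cite{SY79structure} is more commonly attributed to \cite{GromovLawson80} and \cite{SY79}, but this is a citation quibble.)
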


\begin{proof}
	Note that $\hat X = X/\hat G$ is the boundary of a regular neighborhood $N(Z)/\hat G$ of $Z/\hat G$ in $\RR^{2k+1}/\hat G \cong \TT^{2k+1}$. Since $Z$ is a $k$--complex, its regular neighborhood has a PL handle decomposition by handles of indices $\le k$. As a result, its boundary $\hat X$ can be obtained from a disjoint union of $S^{2k}$'s by a sequence of PL surgeries along $\le k-1$ dimensional spheres. 
	Note that by transversality, every PL embedding of a sphere with dimension $\le k-1$ in a $2k$ dimensional smooth manifold can be PL isotopic to a smooth embedding. Therefore, $\hat X$ has a smoothing that is diffeomorphic to a manifold obtained from a disjoint union of $S^{2k}$'s by a sequence of smooth surgeries along $\le k-1$ dimensional spheres. Since $k\ge 2$, all surgeries have codimension $\ge 3$, so this smoothing admits a metric with PSC. Since the smoothing of $\hat X$ is unique, the result is proved.
\end{proof}

\begin{lemm}
	\label{lem_X_torus_deg}
	There exists a continuous map from $X$ to $\TT^{2k}$ whose $\ZZ_2$ degree is nonzero.  
\end{lemm}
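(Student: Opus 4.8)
The plan is to realize $X$ as a hypersurface inside the flat torus $T:=\RR^{2k+1}/G$, to produce inside $T$ a loop meeting $X$ in one point mod $2$, and then to read off the desired map to $\TT^{2k}$ by post-composing with a coordinate projection of $T$. Since $v_1,\dots,v_{2k+1}$ are linearly independent, $G$ is a lattice of full rank, so $T\cong\TT^{2k+1}$; moreover $\RR^{2k+1}=\bigoplus_i\RR v_i$ and $G=\bigoplus_i\ZZ v_i$, so $T=\prod_{i=1}^{2k+1}S^1_i$ with $S^1_i:=\RR v_i/\ZZ v_i$. Let $\iota\colon X=X_0/G\hookrightarrow T$ be the inclusion of the resulting closed PL hypersurface. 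The single geometric input I need is that $X$ does not separate $T$, which is the very mechanism behind its non-orientability: in $\RR^{2k+1}$ the complement of $X_0$ has exactly the two components $U,U'$, and translation by $v_{2k+1}$ interchanges them. Concretely, I would pick $p\in U$, so that $p+v_{2k+1}\in U'$, and take any PL path from $p$ to $p+v_{2k+1}$ transverse to $X_0$; it meets $X_0$ in an odd number of points, and since $p$ and $p+v_{2k+1}$ have the same image in $T$ it descends to a loop $\gamma$ in $T$, transverse to $X$, with mod-$2$ intersection number $\gamma\cdot X=1$. Note $\gamma$ runs in the $v_{2k+1}$-direction, i.e.\ $[\gamma]\in H_1(T;\ZZ_2)$ is the class of the circle $S^1_{2k+1}$.

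Next I would let $q\colon T\to T':=T/S^1_{2k+1}\cong\TT^{2k}$ be the quotient by that circle subgroup, set $f:=q\circ\iota$, and let $\mu'\in H^{2k}(T';\ZZ_2)$ be the fundamental class. Writing $\beta_i\in H^1(T;\ZZ_2)$ for the pullback of the generator of the $i$-th circle factor, one has $q^*\mu'=\beta_1\smile\cdots\smile\beta_{2k}$, and this $2k$-class is mod-$2$ Poincar\'e dual to $[\gamma]$. Hence
\[
\langle f^*\mu',\,[X]_{\ZZ_2}\rangle=\langle \beta_1\smile\cdots\smile\beta_{2k},\,\iota_*[X]_{\ZZ_2}\rangle=[\gamma]\cdot X=1,
\]
using naturality of the Kronecker pairing and mod-$2$ Poincar\'e duality on $T$. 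This says precisely that $f\colon X\to\TT^{2k}$ has nonzero $\ZZ_2$-degree, which is the assertion of the lemma.

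I expect the only real obstacle to be the first move — recognizing that one should push $X$ into $\TT^{2k+1}$ and exploit non-separation, rather than trying to write down an explicit projection $X\to\TT^{2k}$ and count preimages, which entangles one in how the lattice $G$ behaves under coordinate projections. After the loop $\gamma$ is produced the rest is routine: one just needs to be careful that mod-$2$ Poincar\'e duality and the mod-$2$ intersection pairing are invoked for the \emph{non-orientable} manifolds $T$ and $X$ (both are closed PL manifolds, so this is unproblematic), and to check the standard identities $q^*\mu'=\beta_1\smile\cdots\smile\beta_{2k}$ and $\mathrm{PD}_T(\beta_1\smile\cdots\smile\beta_{2k})=[\gamma]$. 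It is worth remarking that this is the $\ZZ_2$-refinement of the integral fact recorded in the introduction: the classes dual to $v_1,\dots,v_{2k-1}$ already have nonzero cup product over $\ZZ$, and passing to $\ZZ_2$ coefficients is exactly what allows one to adjoin $\beta_{2k}$ and reach the top degree $2k$.
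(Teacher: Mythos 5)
Your proof is correct and takes essentially the same route as the paper: both arguments realize $X$ as a hypersurface in $\RR^{2k+1}/G\cong\TT^{2k+1}$, produce a loop in the $v_{2k+1}$-direction that crosses $X$ an odd number of times (using that the translation by $v_{2k+1}$ carries $U$ to $U'$, so a lift of the loop must pass from one side of $X_0$ to the other), and then compose the inclusion with the projection killing the $v_{2k+1}$-circle factor. The only cosmetic difference is that you phrase the final degree computation in the language of Poincar\'e duality and the Kronecker pairing, whereas the paper simply observes that the intersection number of $X$ with the fiber circle computes the $\ZZ_2$ degree directly.
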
 

\begin{proof}
	Note that $X\subset \RR^{2k+1}/G \cong  \TT^{2k+1}$. Let $c$ be the image in $\TT^{2k+1}$ of the straight line segment from $(0,\dots,0)$ to $v_{2k+1}$ in $\RR^{2k+1}$. Then $c$ is a circle. 
	
	The mod 2 intersection number of $X$ and $c$ is odd, because the fundamental domain of the preimage of $c$ in $\RR^{2k+1}$ goes from one side of $X_0$ to the other side of $X_0$.
	As a result, the composition of the inclusion $X\hookrightarrow \RR^{2k+1}/G$ with the quotient map  
	\[
	\RR^{2k+1}/G \to \RR^{2k+1}/\langle v_1,\dots, v_{2k}, \RR v_{2k+1}\rangle  \cong \TT^{2k}
	\]
	has a nonzero $\ZZ_2$ degree. Here, $\langle v_1,\dots, v_{2k}, \RR v_{2k+1}\rangle$ denotes the subgroup of $\RR^{2k+1}$ generated by $v_1,\dots, v_{2k}, \RR v_{2k+1}$.
\end{proof}

\begin{lemm}
	\label{lem_Euler_X}
The Euler number of $X$ is equal to $L^{2k}(L+1)\big(\sum_{i=0}^k (-1)^i {2k+1\choose i}\big)$.
\end{lemm}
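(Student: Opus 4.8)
The plan is to compute $\chi(X)$ by a two-step reduction. First I would express $\chi(X)$ through the Euler characteristic of $Z$ modulo a lattice, exploiting the regular-neighbourhood splitting $\RR^{2k+1}=N(Z)\cup N(Z')$; then I would evaluate that Euler characteristic as an explicit multiple of the Euler characteristic of a torus skeleton. Since $\hat X\to X$ is a connected double cover, $\chi(X)=\tfrac12\chi(\hat X)$, so it suffices to compute $\chi(\hat X)$, where $\hat X=X_0/\hat G$. The crucial point is that $\hat G\subseteq\ZZ^{2k+1}$, so $\hat G$ preserves $Z$ and $Z'$, hence also their regular neighbourhoods $N(Z)$ and $N(Z')$. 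Consequently the torus $\hat T:=\RR^{2k+1}/\hat G$ splits as $\hat T=\bigl(N(Z)/\hat G\bigr)\cup\bigl(N(Z')/\hat G\bigr)$ with intersection exactly $X_0/\hat G=\hat X$ (because $N(Z)\cap N(Z')=X_0$), and each of the two pieces is a regular neighbourhood in $\hat T$ of $Z/\hat G$, resp.\ $Z'/\hat G$, hence deformation retracts onto it; moreover translation by $v_{2k+1}$ gives a homeomorphism $Z/\hat G\cong Z'/\hat G$. Since $\hat T$ is an odd-dimensional closed manifold, $\chi(\hat T)=0$, so inclusion--exclusion ($\chi(A\cup B)=\chi(A)+\chi(B)-\chi(A\cap B)$, valid for PL subcomplexes) yields $0=2\,\chi(Z/\hat G)-\chi(\hat X)$, whence $\chi(X)=\tfrac12\chi(\hat X)=\chi(Z/\hat G)$.

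It remains to compute $\chi(Z/\hat G)$. As $Z$ is $\ZZ^{2k+1}$--invariant and $\hat G\le\ZZ^{2k+1}$ acts freely, $Z/\hat G$ is a covering space of $Z/\ZZ^{2k+1}$ with $d:=[\ZZ^{2k+1}:\hat G]$ sheets, so $\chi(Z/\hat G)=d\cdot\chi(Z/\ZZ^{2k+1})$. The space $Z/\ZZ^{2k+1}$ is precisely the $k$--skeleton of the standard cubical CW structure on $\TT^{2k+1}=\RR^{2k+1}/\ZZ^{2k+1}$, which has exactly $\binom{2k+1}{j}$ cells of dimension $j$; hence $\chi(Z/\ZZ^{2k+1})=\sum_{i=0}^{k}(-1)^i\binom{2k+1}{i}$. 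Finally $d$ equals the absolute value of the determinant of the integer matrix with rows $v_1,\dots,v_{2k},2v_{2k+1}$; expanding along the last column this is $\det(L\,I_{2k})$ times the last coordinate $2L+1$ of $2v_{2k+1}$, i.e.\ $d=L^{2k}(2L+1)$. Multiplying the two factors gives $\chi(X)=L^{2k}(2L+1)\sum_{i=0}^{k}(-1)^i\binom{2k+1}{i}$. (This agrees with the statement except for the prefactor, where I obtain $2L+1$ in place of $L+1$; the degenerate instance $k=0$, in which $X$ is literally a set of $2L+1$ points, already forces the prefactor to be $2L+1$, so I suspect the displayed $L+1$ is a slip for $2L+1$.)

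The heart of the argument, and the step that needs the most care, is the decomposition of $\hat T$: one must check that $N(Z)$ really is $\hat G$--invariant (which rests on the canonicity of barycentric subdivision and of simplicial neighbourhoods with respect to the $\hat G$--invariant triangulation $\mathcal T'$), that the two quotient pieces meet exactly along $X_0/\hat G$, and that $N(Z)/\hat G$ is a regular neighbourhood of $Z/\hat G$ inside $\hat T$, so that Proposition~\ref{prop_regular_nbhd} applies and the Euler-characteristic accounting is valid. Everything else -- the cell count for the torus skeleton, the determinant, and the factor $\tfrac12$ coming from the double cover -- is routine bookkeeping. One could also avoid $\hat X$ entirely by cutting the odd-dimensional torus $\RR^{2k+1}/G$ directly along the one-sided hypersurface $X$, obtaining $N(Z)/\hat G$ with boundary $\hat X$ and running the same Euler-characteristic identity; or, more laboriously, by writing down an explicit $G$--equivariant cell structure on $X_0\subset\mathcal T'$ and counting cells in a fundamental domain of $G$.
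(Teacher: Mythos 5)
Your proof is correct and follows essentially the same route as the paper's: reduce to $\chi(X)=\tfrac12\chi(\hat X)=\chi(Z/\hat G)$ using the vanishing of the Euler characteristic of an odd-dimensional closed manifold (the paper phrases this via the double of $N(Z/\hat G)$; you phrase it via the decomposition $\hat T = N(Z)/\hat G \cup N(Z')/\hat G$ — the same computation in a slightly different guise), and then count cells in $Z/\hat G$.

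The one genuine difference is that you derive the cell count systematically via the covering degree $d=[\ZZ^{2k+1}:\hat G]=|\det(v_1,\dots,v_{2k},2v_{2k+1})|$, whereas the paper simply asserts the count. Your determinant computation is right: $2v_{2k+1}=(1,\dots,1,2L+1)$, so expanding along the last column gives $d=L^{2k}(2L+1)$, not $L^{2k}(L+1)$. This identifies a genuine arithmetic slip in the paper's lemma statement and proof (presumably $2\cdot(\tfrac12+L)=1+2L$ was recorded as $1+L$). The error is harmless for the rest of the paper — Lemma \ref{lem_Euler_X} is only used to show that $|\chi(X)|\to\infty$ as $L\to\infty$, which holds with either prefactor — but the corrected value is $L^{2k}(2L+1)\sum_{i=0}^k(-1)^i\binom{2k+1}{i}$, as you found. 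Your concern about $\hat G$-invariance of $N(Z)$ is legitimate but is already built in: the triangulation $\mathcal T$ (and hence its barycentric subdivision and the simplicial neighborhood of $Z$) is invariant under all integer translations, and $\hat G\subset\ZZ^{2k+1}$.
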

\begin{proof}
	Recall that $\RR^{2k+1}$ has a CW complex structure defined by decomposing it as the union of $[0,1]^{2k+1}+v$ for all $v\in \ZZ^{2k+1}$. 
	This induces a CW complex structure on $Z/\hat G$. It is straightforward to verify that the CW complex structure on $Z/\hat G$ has dimension $k$, and the number of $i$--dimensional cells ($i\le k$) is equal to $L^{2k}(L+1){2k+1\choose i}$. So 
	\[\chi(Z/\hat G) = L^{2k}(L+1)\Big(\sum_{i=0}^k (-1)^i {2k+1\choose i}\Big).\]
	
	The manifold $\hat X$ bounds a regular neighborhood $N(Z/\hat G)$ of $Z/\hat G$. The double of $N(Z/\hat G)$ is a closed manifold with dimension $2k+1$, so its Euler number is zero. 
	Hence we have
	\[
	2 \chi (N(Z/\hat G)) - \chi(\hat X) = 0.
	\]
	Therefore,
	\[
	\chi(X) = \tfrac12 \chi(\hat X) = \chi (N(Z/\hat G)) = \chi(Z/\hat G),
	\]
	and the lemma is proved. 
\end{proof}

\section{Schoen--Yau descent for non-orientable manifolds}
\label{sec.Schoen.Yau}

In this section, we prove that the manifold $X$ constructed in Section \ref{section.construction.X} does not admit any PSC metric. In fact, we will prove the following more general result, which is a generalization of the Schoen--Yau inductive descent argument to non-orientable manifolds. 

\begin{theo}\label{theo.cohomology}
	Suppose $2\le n\le 7$, and $M^n$ is a closed, possibly non-orientable manifold with dimension $n$. Suppose there exist $\alpha_1,\cdots, \alpha_{n-1}\in H^1(M^n;\ZZ)$ such that
	\begin{equation}\label{eq.assumption.cohomology}
		\alpha_1 \smile \cdots\smile \alpha_{n-1} \ne 0\in H^{n-1}(M^n;\ZZ).
	\end{equation}
	Then $M^n$ does not admit any Riemannian metric with positive scalar curvature.
\end{theo}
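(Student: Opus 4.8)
The plan is to adapt the Schoen--Yau inductive descent to the non-orientable setting by reducing dimension one step at a time, cutting along a stable minimal hypersurface Poincaré-dual to one of the classes $\alpha_i$, while carefully tracking orientability. First I would set up the induction on $n$. The base case $n=2$ is classical: a class $\alpha_1 \in H^1(M^2;\ZZ)$ with $\alpha_1 \neq 0 \in H^1(M^2;\ZZ)$ forces $M^2$ to have a non-torsion $H^1$, hence (orientable or not) $M^2$ is not $S^2$ or $\RR P^2$, so by Gauss--Bonnet it carries no PSC metric. For the inductive step, suppose $M^n$ has a PSC metric $g$ and classes $\alpha_1,\dots,\alpha_{n-1}$ with nonzero cup product. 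The class $\alpha_{n-1} \in H^1(M;\ZZ) = [M, S^1]$ is represented by a smooth map $f\colon M \to S^1$; take a regular value and let $\Sigma^{n-1} = f^{-1}(pt)$, a closed hypersurface Poincaré-dual (in the twisted sense if $M$ is non-orientable) to $\alpha_{n-1}$. I would then minimize volume in the homology class of $\Sigma$ — here one must use the appropriate (possibly twisted) homology so that the minimizer is nontrivial — obtaining, since $n \le 7$, a smooth embedded stable minimal hypersurface $\Sigma$, again representing the dual of $\alpha_{n-1}$.

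The key geometric input is the second variation / stability inequality: Schoen--Yau's conformal change argument shows that a stable two-sided minimal hypersurface in a PSC manifold admits a PSC metric, and the two-sided hypothesis is exactly what needs checking here. The hypersurface $\Sigma$ dual to an \emph{integral} class $\alpha_{n-1}$ has trivial normal bundle (it is a level set of a map to $S^1$), hence is two-sided even when $M$ itself is non-orientable — this is precisely why the theorem insists on integral rather than $\ZZ_2$ coefficients, as flagged in the remark. So $\Sigma^{n-1}$ carries a PSC metric. It remains to produce the classes on $\Sigma$: let $\beta_i = \alpha_i|_\Sigma \in H^1(\Sigma;\ZZ)$ for $i = 1,\dots,n-2$ be the restrictions, and argue that $\beta_1 \smile \cdots \smile \beta_{n-2} \neq 0 \in H^{n-2}(\Sigma;\ZZ)$. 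For this I would use that $\Sigma$ is dual to $\alpha_{n-1}$, so that for the inclusion $\iota\colon \Sigma \hookrightarrow M$ one has (via the projection formula / naturality of cap product with the twisted fundamental class) a relation of the form $\iota_*(\iota^*\gamma \frown [\Sigma]) = \gamma \frown (\alpha_{n-1} \frown [M])$, whence the nonvanishing of $\alpha_1 \smile\cdots\smile\alpha_{n-1}$ paired against the (twisted) fundamental class of $M$ descends to the nonvanishing of $\beta_1\smile\cdots\smile\beta_{n-2}$ paired against that of $\Sigma$. Then the inductive hypothesis applied to $\Sigma^{n-1}$ gives a contradiction.

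The main obstacle I anticipate is the homological bookkeeping in the non-orientable case: when $M$ is non-orientable it has no fundamental class in ordinary $H_n(M;\ZZ)$, only a twisted one in $H_n(M;\ZZ^w)$ with $w = w_1(M)$, and cap product with it interacts with the orientation local system. One must check (i) that $\alpha_{n-1}$, being an honest integral class, is still dual to an embedded two-sided hypersurface $\Sigma$; (ii) that the minimizer in the relevant homology class is nonzero and picks up the dual of $\alpha_{n-1}$ — here working with $f\colon M\to S^1$ and minimizing among hypersurfaces separating in the infinite cyclic cover, or equivalently minimizing in $H_{n-1}(M;\ZZ)$, sidesteps the twisting since the target $S^1$ is orientable; (iii) that $\Sigma$ may itself be non-orientable, so the induction genuinely needs the non-orientable version of the statement (which is why Theorem \ref{theo.cohomology} is stated for possibly non-orientable $M$ from the outset), and that the cup-product restriction relation above is correctly formulated with twisted coefficients $w|_\Sigma$. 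Once these points are pinned down, the analytic step (regularity and stability of the minimizer for $n \le 7$, and the Schoen--Yau conformal argument) is standard, and the dimension induction closes.
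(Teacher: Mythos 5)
Your overall strategy is the right one — Schoen--Yau descent, with the key observation that cutting along a hypersurface dual to an \emph{integral} class yields a two-sided hypersurface — and your restriction argument for the cup products is essentially what the paper does (the paper phrases it via the Thom class $e$ of the oriented normal bundle and a commutative diagram, which cleanly avoids any twisted Poincar\'e duality, but the content is the same). Your base case $n=2$ is also correct and matches the paper.

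The gap is in the minimization step, which you flag as an ``obstacle'' but do not resolve, and the options you float do not work as stated. Minimizing in $H_{n-1}(M;\ZZ)$ is not well posed: the level set $\Sigma = f^{-1}(p)$ is two-sided (trivial normal bundle), but since $w_1(T\Sigma) = w_1(TM)|_\Sigma$, it is typically \emph{non-orientable} when $M$ is, and hence is not an integral cycle in $M$ at all. (The ``target $S^1$ is orientable'' remark does not help; the infinite cyclic cover of $M$ is still generally non-orientable.) Likewise one cannot simply minimize in the twisted class $PD(\alpha_{n-1}) \in H_{n-1}(M;\ZZ^w)$ and invoke the standard codimension-1 regularity theory. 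The paper's resolution is the genuinely new technical point you are missing: pass to the orientation double cover $\hat M$ with deck transformation $\gamma$, note that $\pi^*:H^1(M;\ZZ)\to H^1(\hat M;\ZZ)$ is injective, and minimize over the set
\[
\cS = \bigl\{\hat\Sigma \in PD(\pi^*\alpha_{n-1}) \text{ integral current} \ \big|\ \gamma(\hat\Sigma) = -\hat\Sigma\bigr\},
\]
which is nonempty (use $\widehat{f^{-1}(p)}$) and closed under current convergence. Geometric measure theory then produces a smooth minimizer $\hat\Sigma^{n-1}$ in $\cS$; the anti-invariance $\gamma(\hat\Sigma) = -\hat\Sigma$ together with $\gamma$ being orientation-reversing on $\hat M$ forces $\gamma$ to preserve the coorientation of $\hat\Sigma$, so the quotient $\Sigma = \pi(\hat\Sigma) \subset M$ is an embedded two-sided hypersurface, and it is stable among all deformations because $\hat\Sigma$ is minimizing in $\cS$. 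This is the step your proposal needs; without it, the descent does not get off the ground in the non-orientable case.
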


\begin{rema}
	If $M^n$ is orientable, then the theorem follows from the classical Schoen--Yau descent \cite{SY79structure}. Our result extends Schoen--Yau's result to the case of non-orientable manifolds. 
\end{rema}

\begin{rema}\label{rema.cohomology.Z2}
	It is important to have the assumption \eqref{eq.assumption.cohomology} on integral (instead of $\ZZ_2$) cohomology in Theorem \ref{theo.cohomology}. Indeed, $\RR P^{n}$ admits $\alpha\in H^1(\RR P^n; \ZZ_2)$ such that $\alpha^n\ne 0\in H^n(\RR P^n; \ZZ_2)$ and it obviously carries a PSC metric. This contrasts sharply with \cite{Guth2010minimal}, where Guth established the macroscopic analogue of the Schoen--Yau inductive descent for possibly nonorientable manifolds satisfying the analogous cohomology assumption \eqref{eq.assumption.cohomology} with $\ZZ_2$ coefficients. At a technical level, assumption \eqref{eq.assumption.cohomology} with $\ZZ$ coefficients guarantees the two-sidedness of an area-minimizing hypersurface.
\end{rema}

As an immediate corollary of Theorem \ref{theo.cohomology}, we have the following:
\begin{coro}
	\label{cor_odd_deg_torus}
	Suppose $2\le n \le 7$ and $M^n$ is a closed, possibly non-orientable manifold with dimension $n$. Also assume that there exists a continuous map $f:M\to \TT^n$ with a nonzero $\ZZ_2$ degree. Then $M^n$ does not admit any Riemannian metric with positive scalar curvature.
\end{coro}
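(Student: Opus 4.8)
The plan is to derive this directly from Theorem \ref{theo.cohomology} by pulling back the standard generators of $H^1(\TT^n;\ZZ)$. First I would reduce to the case that $M$ is connected: a disjoint union admits a PSC metric if and only if every component does, and over $\ZZ_2$ the fundamental class of $M$ is the sum of those of its components, so if $f$ has nonzero $\ZZ_2$ degree then its restriction to some component has nonzero $\ZZ_2$ degree as well. So assume $M$ is closed and connected of dimension $n$.

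Let $\beta_1,\dots,\beta_n\in H^1(\TT^n;\ZZ)$ be the standard generators, so that $\beta_1\smile\cdots\smile\beta_n$ generates $H^n(\TT^n;\ZZ)\cong\ZZ$, and its mod-$2$ reduction generates $H^n(\TT^n;\ZZ_2)\cong\ZZ_2$. Put $\alpha_i:=f^*\beta_i\in H^1(M;\ZZ)$ for $i=1,\dots,n$. The corollary will follow from Theorem \ref{theo.cohomology} applied to $\alpha_1,\dots,\alpha_{n-1}$ once I check that $\alpha_1\smile\cdots\smile\alpha_{n-1}\ne 0$ in $H^{n-1}(M;\ZZ)$.

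To verify this, I would instead consider the full product $\alpha_1\smile\cdots\smile\alpha_n=f^*(\beta_1\smile\cdots\smile\beta_n)\in H^n(M;\ZZ)$. Since reduction mod $2$ is a ring homomorphism commuting with $f^*$, the mod-$2$ reduction of this class is the pullback along $f$ of the generator of $H^n(\TT^n;\ZZ_2)$. Because $f$ has nonzero $\ZZ_2$ degree and $M,\TT^n$ are closed connected $n$-manifolds, this pullback map $H^n(\TT^n;\ZZ_2)\to H^n(M;\ZZ_2)$ is an isomorphism of groups each isomorphic to $\ZZ_2$ (evaluation against the respective $\ZZ_2$-fundamental classes identifies it with multiplication by $\deg_2 f\equiv 1$). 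Hence $\alpha_1\smile\cdots\smile\alpha_n$ has nonzero mod-$2$ reduction, so it is nonzero in $H^n(M;\ZZ)$; a fortiori $\alpha_1\smile\cdots\smile\alpha_{n-1}\ne 0$ in $H^{n-1}(M;\ZZ)$, and we are done.

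There is no serious obstacle here: the statement is a formal consequence of Theorem \ref{theo.cohomology}. The only point worth flagging is why we invoke only $n-1$ of the pulled-back generators. When $M$ is non-orientable, $H^n(M;\ZZ)\cong\ZZ_2$ carries no integral ``volume form'', so one cannot detect the top integral cup product directly; instead we certify non-vanishing of the degree-$(n-1)$ product by cup-multiplying with $\alpha_n$ and passing to $\ZZ_2$-coefficients, where a fundamental class is available. This is consistent with Remark \ref{rema.cohomology.Z2}: the hypothesis of Theorem \ref{theo.cohomology} genuinely concerns integral classes, while in the present corollary the relevant obstruction is detected mod $2$ through the $\ZZ_2$ degree.
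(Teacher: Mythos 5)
Your argument is correct and takes essentially the same route as the paper: pull back the standard generators of $H^1(\TT^n;\ZZ)$, use the $\ZZ_2$ degree to show that the mod-$2$ reduction of $\alpha_1\smile\cdots\smile\alpha_n$ is nonzero (hence the integral class is nonzero), conclude $\alpha_1\smile\cdots\smile\alpha_{n-1}\ne 0$, and apply Theorem \ref{theo.cohomology}. The paper phrases the mod-$2$ comparison via a commutative square of change-of-coefficient maps rather than verbally, and leaves the reduction to the connected case implicit, but the content is identical.
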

\begin{proof}
	Let $\beta_1,\dots,\beta_n\in H^1(\TT^n;\ZZ)$ such that $\beta_1\smile\dots\smile\beta_n$ equals a generator of $H^n(\TT^n;\ZZ)$. By the assumptions, $f^*: H^{n}(\TT^{n};\ZZ_2)\to H^{n}(M;\ZZ_2)$ is injective. Consider the following commutative diagram 
	\[\begin{tikzcd}
		H^{n}(\TT^{n};\ZZ)\arrow{r} \arrow{d}{f^*}& H^{n}(\TT^{n};\ZZ_2) \arrow{d}{f^*}\\
		H^{n}(M^n;\ZZ) \arrow{r} & H^{n}(M^n;\ZZ_2),
	\end{tikzcd}
	\]
	where the horizontal maps are given by the surjective homomorphism $\ZZ\to \ZZ_2$ on coefficients. Then the image of $\beta_1\smile \dots\smile\beta_n$  in $H^{n}(M^n;\ZZ_2)$ is non-zero, so 
	\[
	f^*\beta_1\smile\dots\smile f^*\beta_n \neq 0 \in H^{n}(M^n;\ZZ).
	\]
	Therefore, $M^n$ satisfies the assumption of Theorem \ref{theo.cohomology}. Hence $M^n$ does not admit any PSC metric. 
\end{proof}

Combining Corollary \ref{cor_odd_deg_torus} and the earlier results, we can now finish the proof of Theorem \ref{theo.example}. 

\begin{proof}[Proof of Theorem \ref{theo.example} assuming Theorem \ref{theo.cohomology}]
	Let $X$ be the manifold constructed in Section \ref{section.construction.X} for $k=2$ or $3$. By comparing the top-dimensional homology groups with $\ZZ_2$ coefficients, every closed manifold $X'$ homotopy equivalent to $X$ is of dimension $n=2k$. Then by Lemma \ref{lem_X_torus_deg} and Corollary \ref{cor_odd_deg_torus}, the manifold $X'$ does not admit any metric with PSC. On the other hand, by Lemma \ref{lem_hatX_PSC}, the orientation double cover of $X$ admits PSC metrics. This gives the examples for $n=4$ and $6$. 
	
	To construct the examples for $n=5$ and $7$, note that if $M^n$ admit a map to $\TT^n$ with a nonzero $\ZZ_2$ degree, then $M^n\times S^1$ admit a map to $\TT^{n+1}$ with a nonzero $\ZZ_2$ degree. Therefore, the examples for $n=5$ and $7$ can be given by taking the product of $S^1$ with the example in dimension $n-1$. Alternatively, let $X^4$ be the example in dimension $4$, then $X^4\times (S^1)^{n-4}$ are desired examples in dimensions $5\le n\le 7$. 
	
	Recall that the construction of $X$ depends on the choices of a positive integer $L$. 
	By Lemma \ref{lem_Euler_X}, the total rank of the homology groups of $X^4\times (S^1)^{n-4}$ diverges to infinity as $L\to \infty$, so we obtained infinitely many examples that are mutually non-homotopy equivalent. 
\end{proof}

The rest of this section proves Theorem \ref{theo.cohomology}. We apply induction on $n$. When $n=2$, the only closed non-orientable manifold with positive curvature is $\RR P^2$. But $H^1(\RR P^2;\ZZ) = 0$, so the conclusion holds. Assume now that the conclusion holds for $n-1$.

If $M^n$ is orientable, then the result follows from the classical theorem of Schoen--Yau.  We assume without loss of generality that $M^n$ is non-orientable. For the sake of contradiction, assume $M^n$ has a PSC metric $g$. Let $\hat M^n$ be the orientation double cover of $M^n$, let $\pi:\hat M^n\to M^n$ be the covering map, and let $\hat g$ be the pull-back of $g$ to $\hat M^n$. 

\begin{lemm}
	\label{lem_pi_inj_H1}
	The pull back map $\pi^*:H^1(M^n;\ZZ)\to H^1(\hat M^n;\ZZ)$ is injective.
\end{lemm}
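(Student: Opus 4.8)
The plan is to use the transfer (or just the covering-space structure) to show that the composition $H^1(M^n;\ZZ) \xrightarrow{\pi^*} H^1(\hat M^n;\ZZ) \xrightarrow{\pi_*} H^1(M^n;\ZZ)$ is multiplication by the degree $2$, from which injectivity is immediate because $H^1$ of a manifold is torsion-free. More precisely, for a double cover there is a transfer homomorphism $\tau = \pi_!$ on cohomology with $\pi^* \circ \pi_! = 1 + \sigma^*$ and $\pi_! \circ \pi^* = \cdot 2$, where $\sigma$ is the deck transformation; I would cite the standard fact (e.g. Hatcher, or the basic theory of transfers for finite covers) that $\pi_! \circ \pi^*$ equals multiplication by the number of sheets. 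Since $H^1(M^n;\ZZ) = \Hom(H_1(M^n;\ZZ),\ZZ)$ by the universal coefficient theorem, it is a free abelian group, hence has no $2$-torsion; therefore multiplication by $2$ is injective on it, and so $\pi^*$ is injective.

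Alternatively — and this is perhaps cleaner to write — I would argue directly with $H^1$ as homotopy classes of maps to $S^1$, or via the exact sequence of the coefficient module. A class $\alpha \in H^1(M^n;\ZZ)$ is detected by its pairing with $H_1(M^n;\ZZ)$; if $\pi^*\alpha = 0$ then $\alpha$ vanishes on the image of $\pi_* : H_1(\hat M^n;\ZZ) \to H_1(M^n;\ZZ)$. The cokernel of this map is a quotient of $\ZZ_2$ (it is $H_1$ of the base with $\ZZ$ coefficients twisted by the covering, modulo image; concretely, for the orientation double cover the index-two subgroup $\pi_1(\hat M^n) \le \pi_1(M^n)$ has abelianization mapping onto a finite-index subgroup), so $2\alpha$ vanishes on all of $H_1(M^n;\ZZ)$, hence $2\alpha = 0$, hence $\alpha = 0$ by torsion-freeness. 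Either route reduces the statement to the single point that $H^1(M^n;\ZZ)$ is torsion-free and that the composite involves a factor of $2$.

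The main (and only) obstacle is making sure the factor-of-$2$ statement is set up correctly: one must use that $\pi$ is a genuine double cover and invoke the transfer correctly, being careful that it is $\pi_! \circ \pi^*$ (not $\pi^* \circ \pi_!$) that is multiplication by the degree. There is no analytic or geometric content here — this is a purely algebraic-topological fact about finite covers — so I expect the write-up to be short. One subtlety worth a sentence: the conclusion genuinely uses $\ZZ$ coefficients (it would fail with $\ZZ_2$), which is consistent with Remark \ref{rema.cohomology.Z2}, and the torsion-freeness of $H^1(-;\ZZ)$ is exactly what makes the $\ZZ$ version work.
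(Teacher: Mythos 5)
Your proposal is correct and is essentially the same argument as the paper's. The paper works with the identification $H^1(M^n;\ZZ)\cong\Hom(\pi_1(M^n),\ZZ)$ and observes that a homomorphism $\rho:\pi_1(M^n)\to\ZZ$ vanishing on the index-two subgroup $\pi_*(\pi_1(\hat M^n))$ must vanish everywhere, because $g^2$ lies in the index-two subgroup so $2\rho(g)=0$ and $\ZZ$ is torsion-free; this is exactly your ``factor of $2$ plus torsion-freeness of $H^1(-;\ZZ)$'' reduction, just phrased at the $\pi_1$ level rather than via the cokernel of $\pi_*:H_1(\hat M^n;\ZZ)\to H_1(M^n;\ZZ)$ or the transfer identity $\pi_!\circ\pi^*=\cdot\,2$. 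Your transfer-based packaging is marginally more off-the-shelf, while the paper's and your second variant are the same computation unpacked. One cosmetic note: in your parenthetical the phrase ``mapping onto a finite-index subgroup'' understates what you need — you need the index to divide $2$ (which you do correctly assert one line earlier when you say the cokernel is a quotient of $\ZZ_2$), so you may want to tighten that sentence.
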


\begin{proof}
	By the universal coefficient theorem, $H^1(M^n; \ZZ) \cong \Hom(H_1(M^n;\ZZ),\ZZ)$. Since $H_1$ is the abelianization of $\pi_1$, and $\ZZ$ is abelian, we have a natural isomorphism $\Hom(H_1(M^n;\ZZ),\ZZ ) \cong \Hom(\pi_1(M^n),\ZZ)$. This yields a natural isomorphism
	\[H^1(M^n,\ZZ) \cong \Hom(\pi_1(M^n),\ZZ).\]
	
	Consider the commutative diagram
	\[\begin{tikzcd}
		H^1(M^n,\ZZ) \arrow{r}{\pi^*} \arrow{d}{\cong}& H^1(\hat M^n, \ZZ) \arrow{d}{\cong}\\
		\Hom(\pi_1(M^n),\ZZ) \arrow{r}{\pi^*} & \Hom(\pi_1(\hat M^n),\ZZ),
	\end{tikzcd}
	\]
	where the vertical maps are the natural isomorphisms defined above.
	The image of $\pi_1(\hat M^n)$ in $\pi_1(M^n)$ is a subgroup of $\pi_1(M^n)$ with index $2$. So, a homomorphism $\rho:\pi_1(M^n)\to \ZZ$ is trivial if and only if its restriction to the image of $\pi_1(\hat M^n)$ in trivial. This implies that the map $\pi^*: \Hom(\pi_1(M^n),\ZZ) \to \Hom(\pi_1(\hat M^n),\ZZ)$ is injective, so $\pi^*:H^1(M^n;\ZZ)\to H^1(\hat M^n;\ZZ)$ is injective.
\end{proof}

Let $\alpha_1,\dots,\alpha_{n-1}$ be given by the assumptions of Theorem \ref{theo.cohomology}. By Lemma \ref{lem_pi_inj_H1}, we know that $\pi^*\alpha_{n-1}\ne 0\in H^1(\hat M^n,\ZZ)$. 

Let $\gamma$ be the non-trivial deck transformation on $\hat M^n$, then $\gamma$ is an orientation-reversion isometry of $(\hat M^n, \hat g)$. 

Fix an orientation of $\hat{M}^n$.
Let $PD(\pi^*\alpha_{n-1})\in H_{n-1}(\hat{M}^n;\ZZ)$ denote the Poincar\'e dual of $\pi^*\alpha_{n-1}$. Since $\gamma$ is an orientation-reversing diffeomorphism of $\hat M^n$ and $\gamma^*(\pi^*\alpha_{n-1}) = \pi^*\alpha_{n-1}$, we have
\[
\gamma_* (PD(\pi^*\alpha_{n-1})) = - PD(\pi^*\alpha_{n-1}). 
\]

Consider the set
\[\cS = \{\hat \Sigma\in PD(\pi^*\alpha_{n-1}) \text{ is an integral current}\mid \gamma(\hat \Sigma) = -\hat \Sigma \}.\]
Note that $\cS$ is closed under current convergence: indeed, if $\hat \Sigma_j \in \cS$ converges to $\hat \Sigma$, then $\hat \Sigma$ is integral; moreover, we have that for any continuous $(n-1)$-form $\omega$, $\int_{\hat \Sigma_j} (\gamma^*\omega+\omega)=0$, so $\int_{\hat \Sigma} (\gamma^*\omega + \omega) = 0$. Thus, $\hat \Sigma\in \cS$. The set $\mathcal{S}$ is also non-empty. To find an element in $\mathcal{S}$, let $f:M\to S^1$ be a smooth map such that the pull-back of a generator of $H^1(S^1;\ZZ)$ to $M$ equals $\alpha_{n-1}$, let $p\in S^1$ be a regular value of $f$, then $(f\circ\pi)^{-1}(p)$ can be endowed with an orientation so that it defines an integral current in $\mathcal{S}$. 

Therefore, by minimizing the $(n-1)$-dimensional volume among elements in $\cS$, we obtain a smooth oriented area-minimizing hypersurface $\hat \Sigma^{n-1}$ satisfying $\gamma(\hat \Sigma^{n-1}) = -\hat\Sigma^{n-1}$, where $-\hat\Sigma^{n-1}$ denotes $\hat \Sigma^{n-1}$ with the reversed orientation. Let $\Sigma^{n-1}\subset M^n$ be the image of  $\hat \Sigma^{n-1}$ under $\pi$. 

Note that $\Sigma^{n-1}$ is two-sided. In fact, since $\gamma(\hat \Sigma^{n-1}) = - \hat \Sigma^{n-1}$ and $\gamma$ reverses the orientation of $\hat M^n$, the map $\gamma$ preserves the orientation of the normal bundle of $\hat \Sigma^{n-1}$. Therefore, the normal bundle of $\Sigma^{n-1}$ is oriented. 

By construction, $\hat \Sigma^{n-1}$ is stable among all deformations inside the set $\cS$, so $\Sigma^{n-1}$ is stable among all deformations.  
The stability inequality together with the Schoen-Yau rearrangement \cite{SY79} on the two-sided hypersurface $\Sigma^{n-1}$ give that
\[\int_{\Sigma^{n-1}} |\nabla f|^2 - \frac12 (R_{M^n} - R_{\Sigma^{n-1}} + |A|^2)f^2 \ge 0,\]
for all $f\in C^1(\Sigma^{n-1})$. If $n=3$, this together with Gauss-Bonnet imply that every connected component of $\Sigma^{2}$ is diffeomorphic to $\RR P^2$ or $S^2$. If $n>3$, this implies that $\lambda_1(-\Delta + \tfrac{n-3}{4(n-2)}R_g)>0$ on $\Sigma^{n-1}$, so $g|_{\Sigma^{n-1}}$ is conformal to a positive scalar curvature metric.

In the following, we show that $\Sigma^{n-1}$ satisfies the assumptions of Theorem \ref{theo.cohomology}, so we may apply the induction hypothesis on $\Sigma^{n-1}$.

Let $\nu\Sigma^{n-1}$ be the normal bundle of $\Sigma^{n-1}$ in $M^n$. Since  $\nu\Sigma^{n-1}$ is an oriented line bundle, it has a Thom class
$e \in H^1(\nu\Sigma^{n-1}, \nu\Sigma^{n-1}\setminus \Sigma^{n-1};\ZZ)$. Let $e|_{M^n}$ denote the image of $e$ in $H^1(M^n;\ZZ)$
under the composition map 
\[
H^1(\nu\Sigma^{n-1}, \nu\Sigma^{n-1}\setminus \Sigma^{n-1};\ZZ)  \cong H^1(M^n,M^n\setminus \Sigma^{n-1}; \ZZ) \to H^1(M^n;\ZZ),
\]
where the first isomorphism is given by excision, and the second map is the pull back map of the inclusion $(M^n,\emptyset)\subset (M^n,M^n\setminus \Sigma^{n-1})$.
\begin{lemm}
	We have $e|_{M^n} = \alpha_{n-1}$ in $H^1(M^n;\ZZ)$. 
\end{lemm}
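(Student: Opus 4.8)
The plan is to push the identity up to the orientation double cover, where it becomes the classical fact that the Thom class of a cooriented hypersurface represents its Poincar\'e dual. By Lemma \ref{lem_pi_inj_H1}, the map $\pi^*\colon H^1(M^n;\ZZ)\to H^1(\hat M^n;\ZZ)$ is injective, so it is enough to check that $\pi^*(e|_{M^n}) = \pi^*\alpha_{n-1}$ in $H^1(\hat M^n;\ZZ)$.

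First I would identify $\pi^*(e|_{M^n})$ with the analogous Thom class upstairs. Since $\gamma(\hat\Sigma^{n-1})=\hat\Sigma^{n-1}$ as subsets of $\hat M^n$, we have $\pi^{-1}(\Sigma^{n-1})=\hat\Sigma^{n-1}$; consequently $\pi$ carries a tubular neighborhood of $\hat\Sigma^{n-1}$ onto a tubular neighborhood of $\Sigma^{n-1}$, and identifies the normal bundle $\nu\hat\Sigma^{n-1}$ with the pullback bundle $(\pi|_{\hat\Sigma^{n-1}})^*(\nu\Sigma^{n-1})$. Crucially, the orientation of $\nu\Sigma^{n-1}$ was defined in the paragraph preceding the lemma precisely as the descent of the ($\gamma$-invariant) orientation of $\nu\hat\Sigma^{n-1}$ that comes from the chosen orientations of $\hat M^n$ and $\hat\Sigma^{n-1}$; hence the Thom class $e$ of $\nu\Sigma^{n-1}$ pulls back to the Thom class $\hat e\in H^1(\nu\hat\Sigma^{n-1},\,\nu\hat\Sigma^{n-1}\setminus\hat\Sigma^{n-1};\ZZ)$ of $\nu\hat\Sigma^{n-1}$. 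By naturality of the excision isomorphism and of the restriction map, $\pi^*(e|_{M^n})$ equals the image $\hat e|_{\hat M^n}$ of $\hat e$ in $H^1(\hat M^n;\ZZ)$.

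Next I would invoke the standard identification on the closed oriented manifold $\hat M^n$: for the smooth closed oriented hypersurface $\hat\Sigma^{n-1}$, cooriented so that the orientation of $\nu\hat\Sigma^{n-1}$ followed by the orientation of $\hat\Sigma^{n-1}$ recovers the orientation of $\hat M^n$, the class $\hat e|_{\hat M^n}$ is the Poincar\'e dual of the fundamental class $[\hat\Sigma^{n-1}]\in H_{n-1}(\hat M^n;\ZZ)$. Finally, $\hat\Sigma^{n-1}$ was chosen inside the set $\cS$, all of whose elements represent the homology class $PD(\pi^*\alpha_{n-1})$; therefore $[\hat\Sigma^{n-1}]=PD(\pi^*\alpha_{n-1})$, and its Poincar\'e dual is $\pi^*\alpha_{n-1}$. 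Stringing the equalities together gives $\pi^*(e|_{M^n})=\hat e|_{\hat M^n}=\pi^*\alpha_{n-1}$, and injectivity of $\pi^*$ yields $e|_{M^n}=\alpha_{n-1}$.

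The only delicate point is the orientation bookkeeping: one must be sure that the coorientation of $\hat\Sigma^{n-1}$ implicit in the Thom class is the one compatible with the ambient orientation of $\hat M^n$ and the orientation carried by $\hat\Sigma^{n-1}$ as a minimizing integral current, so that the classical identity ``Thom class $=$ Poincar\'e dual'' applies without an extra sign; this is exactly the compatibility that was arranged when the orientation of $\nu\Sigma^{n-1}$ was introduced. Everything else is naturality of the Thom class under bundle pullback and of excision.
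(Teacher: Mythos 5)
Your proof is correct and follows essentially the same route as the paper: pull back to $\hat M^n$ via the injectivity of $\pi^*$ from Lemma~\ref{lem_pi_inj_H1}, use naturality of the Thom class to identify $\pi^*(e|_{M^n})$ with $\hat e|_{\hat M^n}$, apply the ``Thom class $=$ Poincar\'e dual'' identity, and use that $[\hat\Sigma^{n-1}] = PD(\pi^*\alpha_{n-1})$ by construction of $\mathcal S$. Your extra paragraph on the orientation bookkeeping makes explicit a compatibility the paper leaves implicit, but the argument is the same.
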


\begin{proof}
	Let $\nu\hat{\Sigma}^{n-1}$ be the oriented normal bundle of $\hat{\Sigma}^{n-1}$.
	Let $\hat{e} \in H^1(\nu\hat{\Sigma}^{n-1},\nu\hat{\Sigma}^{n-1}\setminus \hat{\Sigma}^{n-1};\ZZ)$ be the Thom class of $\nu\hat{\Sigma}^{n-1}$, and let $\hat{e}|_{\hat{M}^{n}}$ denote the image of $\hat{e}$
	under the composition map 
	\[
	H^1(\nu\hat{\Sigma}^{n-1}, \nu\hat{\Sigma}^{n-1}\setminus \hat{\Sigma}^{n-1};\ZZ)  \cong H^1(\hat{M}^n,\hat{M}^n\setminus \hat{\Sigma}^{n-1}; \ZZ) \to H^1(\hat{M}^n;\ZZ).
	\]
	
	By the naturality of Thom class, the pull-back of $e$ equals $\hat {e}$, so $\pi^*(e|_{M}) = \hat e|_{\hat{M}}$. By the relations of Poincar\'e duality with Thom class, we know that $\hat e|_{\hat{M}} = PD([\hat{\Sigma}])$, so $ \hat e|_{\hat{M}} = \pi^*(\alpha_{n-1})$. As a consequence, $\pi^*(e|_{M})= \pi^*(\alpha_{n-1})$. By Lemma \ref{lem_pi_inj_H1}, $\pi^*$ is injective on $H^1(-;\ZZ)$, so  $e|_{M^n} = \alpha_{n-1}$.
\end{proof}

\begin{lemm}
	\label{lem_Sigma_coh_cup}
	$(\alpha_1|_{\Sigma^{n-1}})\smile \cdots \smile (\alpha_{n-2}|_{\Sigma^{n-1}}) \ne 0\in H^{n-2}(\Sigma^{n-1};\ZZ).$
\end{lemm}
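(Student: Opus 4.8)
The plan is to push the cohomology classes $\alpha_1,\dots,\alpha_{n-2}$ down to $\Sigma^{n-1}$ by restriction and show their cup product survives. The key point is that the class $\alpha_{n-1}$, which we "used up" in constructing $\Sigma^{n-1}$, is exactly the class dual to $\Sigma^{n-1}$ (by the preceding lemma $e|_{M^n}=\alpha_{n-1}$), so cupping with $\alpha_{n-1}$ on $M^n$ corresponds, under Poincaré–Lefschetz duality, to restricting to $\Sigma^{n-1}$. Concretely, I would argue as follows. Let $i:\Sigma^{n-1}\hookrightarrow M^n$ be the inclusion, and recall that $e|_{M^n}\in H^1(M^n;\ZZ)$ is the image of the Thom class under $H^1(M^n,M^n\setminus\Sigma^{n-1};\ZZ)\to H^1(M^n;\ZZ)$. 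For any class $\beta\in H^{n-2}(M^n;\ZZ)$, the cup product $\beta\smile (e|_{M^n})\in H^{n-1}(M^n;\ZZ)$ is the image of $\beta\smile e$ under the same map $H^{n-1}(M^n,M^n\setminus\Sigma^{n-1};\ZZ)\to H^{n-1}(M^n;\ZZ)$. The fundamental compatibility I want is that, under the Thom isomorphism $H^{n-1}(M^n,M^n\setminus\Sigma^{n-1};\ZZ)\cong H^{n-2}(\Sigma^{n-1};\ZZ)$ (valid because $\nu\Sigma^{n-1}$ is oriented), the class $\beta\smile e$ corresponds to $i^*\beta\in H^{n-2}(\Sigma^{n-1};\ZZ)$.

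With this in hand, apply it to $\beta=\alpha_1|_{M^n}\smile\cdots\smile\alpha_{n-2}|_{M^n}$, i.e. $\beta=\alpha_1\smile\cdots\smile\alpha_{n-2}$. Then
\[
(\alpha_1\smile\cdots\smile\alpha_{n-2})\smile\alpha_{n-1} = (\alpha_1\smile\cdots\smile\alpha_{n-2})\smile (e|_{M^n})
\]
is, by the Thom-isomorphism compatibility above, the image under $H^{n-1}(M^n,M^n\setminus\Sigma^{n-1};\ZZ)\to H^{n-1}(M^n;\ZZ)$ of the class corresponding to $i^*(\alpha_1\smile\cdots\smile\alpha_{n-2}) = (\alpha_1|_{\Sigma^{n-1}})\smile\cdots\smile(\alpha_{n-2}|_{\Sigma^{n-1}})$. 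Since the left side equals $\alpha_1\smile\cdots\smile\alpha_{n-1}\ne 0$ by hypothesis \eqref{eq.assumption.cohomology}, the class $(\alpha_1|_{\Sigma^{n-1}})\smile\cdots\smile(\alpha_{n-2}|_{\Sigma^{n-1}})\in H^{n-2}(\Sigma^{n-1};\ZZ)$ must be nonzero, which is the claim. (Here one must note $\Sigma^{n-1}$ is compact — it is a closed hypersurface — so Poincaré duality and the Thom isomorphism for its normal bundle in $M^n$ apply; its orientability is not needed, only the orientability of the normal line bundle, which was established.)

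The main obstacle is making the Thom-isomorphism compatibility precise and citing it correctly. The cleanest route is probably to invoke the standard fact that for an oriented closed submanifold (or more precisely a submanifold with oriented normal bundle) $\Sigma\subset M$ with tubular neighborhood $U$, the composite
\[
H^*(\Sigma;\ZZ)\xrightarrow{\ \cong\ } H^{*+c}(U,U\setminus\Sigma;\ZZ)\xrightarrow{\ \cong\ } H^{*+c}(M,M\setminus\Sigma;\ZZ)\to H^{*+c}(M;\ZZ)
\]
(the first map being the Thom isomorphism, $c=\operatorname{codim}=1$ here) is the Gysin/pushforward map $i_!$, and that it satisfies the projection formula $i_!(i^*\beta) = \beta\smile i_!(1) = \beta\smile (e|_{M^n})$. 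Then $i_!(1) = e|_{M^n}$ by definition of the Thom class, so the identity I need is exactly the projection formula for $i_!$ applied to $\beta = \alpha_1\smile\cdots\smile\alpha_{n-2}$. Alternatively, if one prefers to avoid Gysin maps, one can run the whole argument downstairs on $\hat M^n$ using honest Poincaré duality (there $\hat\Sigma^{n-1}$ is oriented and $PD([\hat\Sigma^{n-1}])=\pi^*\alpha_{n-1}$ by the preceding lemma), establish the analogous nonvanishing of $(\pi^*\alpha_1|_{\hat\Sigma})\smile\cdots\smile(\pi^*\alpha_{n-2}|_{\hat\Sigma})$ via the classical identity $PD(a\smile PD(b)) = $ intersection, and then descend using that $\pi|_{\hat\Sigma}:\hat\Sigma^{n-1}\to\Sigma^{n-1}$ is a double cover together with a Lemma-\ref{lem_pi_inj_H1}-type injectivity statement; but the direct Gysin-map argument on $M^n$ is shorter and I would present that.
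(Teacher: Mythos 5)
Your proposal is correct and follows essentially the same approach as the paper: the paper encodes the identity $\beta\smile(e|_{M^n})=i_!(i^*\beta)$ via a commutative diagram relating cup-with-$e$, excision, and restriction to the tubular neighborhood $\nu\Sigma^{n-1}$, rather than naming the Gysin map and projection formula. (The paper's phrasing is marginally leaner, since it only needs that cupping $x|_{\nu\Sigma^{n-1}}$ with the Thom class yields something nonzero, and does not invoke the full Thom isomorphism — but the algebraic content is identical.)
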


\begin{proof}
	Let $p: \nu\Sigma^{n-1} \to \Sigma^{n-1}$ be the projection of the normal bundle of  $\Sigma^{n-1}$ to the base. 
	To simplify notation, we also use $e$ to denote the image of $e\in H^1(\nu\Sigma^{n-1}, \nu\Sigma^{n-1}\setminus \Sigma^{n-1};\ZZ)$ under the excision isomorphism 
	\[
	H^1(\nu\Sigma^{n-1}, \nu\Sigma^{n-1}\setminus \Sigma^{n-1};\ZZ)\cong H^1(M^n,M^n\setminus \Sigma^{n-1};\ZZ).
	\]
	Consider the commutative diagram
	\[ \begin{tikzcd}
		H^{n-2}(M^n;\ZZ) \arrow{r}{\smile e} \arrow[swap]{d}{} & H^{n-1}(M^n,M^n\setminus \Sigma^{n-1};\ZZ) \arrow{d}{\cong} \arrow{r} & H^{n-1}(M^n;\ZZ) \\%
		H^{n-2}(\nu \Sigma^{n-1};\ZZ) \arrow{r}{\smile e}& H^{n-1}(\nu\Sigma^{n-1}, \nu\Sigma^{n-1}\setminus \Sigma^{n-1};\ZZ)
	\end{tikzcd}
	\]
	where the unlabeled maps are pull backs induced by the inclusions. 
	Consider 
	\[
	x = \alpha_1\smile\dots\smile \alpha_{n-2}\in H^{n-2}(M^n;\ZZ).
	\]
	Since $ e|_{M^n} = \alpha_{n-1}$, the image of $x$ in $H^{n-1}(M^n;\ZZ)$ equals 
	\[
	\alpha_1\smile\dots\smile \alpha_{n-2}\smile \alpha_{n-1},
	\]
	which is non-zero by the assumptions. Therefore, the image of $x$ in $H^{n-2}(\nu\Sigma^{n-1};\ZZ)$ is non-zero. Since $\nu\Sigma^{n-1}$ deformation retracts to $\Sigma^{n-1}$, the lemma is proved.
\end{proof}

Now we finish the proof of Theorem \ref{theo.cohomology}.
\begin{proof}[Proof of Theorem \ref{theo.cohomology}]
	By Lemma \ref{lem_Sigma_coh_cup}, $\Sigma^{n-1}$ satisfies all the assumptions of Theorem \ref{theo.cohomology}. By the induction hypothesis, $\Sigma^{n-1}$ cannot admit a metric with positive scalar curvature. This contradicts the earlier result that $g|_{\Sigma^{n-1}}$ is conformal to a positive scalar curvature metric. 
\end{proof}

\section{The enlargeability criterion and band width estimates}
\label{sec.enlargeability}
With the help of Theorem \ref{theo.cohomology} and Corollary \ref{cor_odd_deg_torus}, we establish an enlargeability criterion that obstructs existence of PSC metrics on non-orientable manifolds. We also obtain width estimates for $\ZZ_2$-over-torical bands in the process.

\begin{defi}\label{defi.enlargeable}
	Suppose $M^n$ is a closed, possibly non-orientable $n$--manifold. We say that $M^n$ is \emph{$\ZZ_2$--enlargeable}, if for each $\eps>0$, there exists a covering space $\hat M^n$ of $M^n$ and an $\eps$-contracting continuous map $f: \hat M^n\to S^n(1)$ to the unit sphere, such that $f$ is constant near infinity and has a nonzero $\ZZ_2$ degree.
\end{defi} 

This is an extension of the original definition of enlargeable manifolds for the orientable case by Gromov--Lawson \cite{GromovLawson80}. It is not hard to check that the notion of enlargeability for non-orientable manifolds depends only on the homotopy type of the manifold, and is preserved under taking connected sums or products. Generally, any manifold which admits a map of nonzero $\ZZ_2$ degree to a $\ZZ_2$--enlargeable manifold is itself enlargeable (cf. \cite[Theorem 5.3]{SpinGeometry}, where one can apply the same proof for the non-orientable case by replacing the coefficients with $\ZZ_2$). 

The main result of this section is the following theorem. 

\begin{theo}\label{theo.enlargeable}
	Suppose $3\le n \le 7$, and $M^n$ is a closed, possibly non-orientable, $\ZZ_2$--enlargeable manifold. Then $M^n$ does not admit any PSC Riemannian metric.
\end{theo}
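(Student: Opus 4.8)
The plan is to argue by contradiction, following the classical Gromov--Lawson reduction of enlargeability to a degree statement for maps to the torus, and then to invoke Corollary \ref{cor_odd_deg_torus}. Suppose $M^n$ is $\ZZ_2$--enlargeable and carries a PSC metric $g$. Let $\kappa>0$ be a lower bound for the scalar curvature of $(M^n,g)$. A PSC metric on $M^n$ pulls back to a PSC metric on any covering space $\hat M^n$ with the \emph{same} scalar curvature lower bound $\kappa$, and the deck group acts by isometries. So the enlargeability hypothesis hands us, for every $\eps>0$, a covering $\hat M^n \to M^n$, a PSC metric with $\scal \ge \kappa$, and an $\eps$--contracting map $f\colon \hat M^n \to S^n(1)$ that is locally constant near infinity and of nonzero $\ZZ_2$ degree.

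The key step is to convert $f$ into a degree statement. Compose $f$ with a degree-one map $S^n(1)\to \TT^n$ that is constant near a point $p_0\in S^n$ whose preimage is where $f$ is nonconstant; one should instead proceed as in the orientable case: since $f$ is constant near infinity, it factors (up to homotopy) through the one-point compactification of $\hat M^n$, but the cleaner route is to pass to a further \emph{finite} cover. I would use the standard trick: the nonzero $\ZZ_2$ degree and the near-infinity constancy allow one to produce, on a suitable finite cover $M'$ of $M^n$ (or on $\hat M^n$ itself if it is already compact), a map $M' \to \TT^n$ of nonzero $\ZZ_2$ degree which is moreover $\eps$--contracting for a metric with $\scal\ge\kappa$. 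Concretely: take the finite cover $M'$ corresponding to a finite-index subgroup through which the relevant $H^1$ classes factor; pull back the $n$ generators of $H^1(\TT^n;\ZZ_2)$ along $f$ composed with a pinch $S^n \to \TT^n$, observe their cup product is nonzero in $H^n(M';\ZZ_2)$ since $\deg_{\ZZ_2} f\ne 0$. Now Corollary \ref{cor_odd_deg_torus} applies directly to $M'$: it is a closed manifold of dimension $n$ with $3\le n\le 7$ admitting a $\ZZ_2$--degree-nonzero map to $\TT^n$, hence $M'$ admits no PSC metric --- contradicting that the pullback of $g$ to $M'$ is PSC.

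Actually, the $\eps$--contracting hypothesis should be unnecessary once one has Corollary \ref{cor_odd_deg_torus}, which is purely topological: the role of $\eps$ in the classical Gromov--Lawson argument is to kill the higher homotopy of $S^n$ and reduce to the torus, but here the reduction to $\TT^n$ can be done homotopy-theoretically as follows. Since $f\colon \hat M^n\to S^n$ is constant near infinity, it induces a map on one-point compactifications, but more usefully: the $\ZZ_2$ fundamental class argument shows $f^*$ is nonzero on $H^n(-;\ZZ_2)$ for the \emph{compactly supported} cohomology, and by passing to the finite cover realizing the Poincaré duals of classes in $H^1$ one obtains the torus map. I would therefore structure the proof as: (i) reduce, using that enlargeability is detected on finite covers, to finding on some \emph{closed} finite cover $M'$ of $M^n$ a map to $\TT^n$ of nonzero $\ZZ_2$ degree; (ii) apply Corollary \ref{cor_odd_deg_torus} to $M'$ to conclude $M'$ has no PSC metric; (iii) note the pullback of $g$ to $M'$ would be PSC, a contradiction.

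The main obstacle is step (i): extracting a \emph{torus} target and a \emph{closed} finite cover from the given data, which a priori involves an infinite covering $\hat M^n$ and a sphere target. The honest way to do this is the Gromov--Lawson argument adapted to $\ZZ_2$ coefficients: one uses the $\eps$--contracting condition together with the scalar curvature bound to homotope $f$ (after passing to a large finite cover, using that $\hat M^n$ is then "locally like" the finite cover at scale $1/\eps$) to a map that factors through $\TT^n$ while keeping nonzero $\ZZ_2$ degree. This is where the geometry (not just topology) enters, and it is exactly the non-orientable $\ZZ_2$--analogue of \cite[Theorem 5.3]{SpinGeometry} and of Gromov--Lawson \cite{GromovLawson80}; I expect the proof to cite that the same argument goes through verbatim with $\ZZ_2$ coefficients, the only new input being Corollary \ref{cor_odd_deg_torus} in place of the orientable torus obstruction. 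The dimension restriction $3\le n\le 7$ is inherited entirely from Corollary \ref{cor_odd_deg_torus}.
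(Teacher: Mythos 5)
Your proposal has a genuine gap at the step you yourself flag as ``the main obstacle'' (step (i)), and in fact that step cannot be repaired along the lines you suggest. You want to extract from the enlargeability data a \emph{closed finite cover} $M'$ of $M^n$ together with a $\ZZ_2$--degree-nonzero map $M'\to\TT^n$, so that Corollary~\ref{cor_odd_deg_torus} can be applied to $M'$ in dimension $n$. But $\ZZ_2$--enlargeability does not imply the existence of such a finite cover: the covering $\hat M^n$ in Definition~\ref{defi.enlargeable} may be non-compact (indeed the interesting cases always are), and there is no homotopy-theoretic way to convert the $\eps$--contracting map $f:\hat M^n\to S^n$ into a nonzero $\ZZ_2$--degree map to $\TT^n$ on a compact quotient. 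Your parenthetical ``compose $f$ with a degree-one map $S^n(1)\to\TT^n$'' cannot even get started, since no such map exists ($H^1(S^n;\ZZ)=0$), and you acknowledge this but then wave at a ``standard trick'' that does not exist. The purported reduction is essentially the (false) assertion that a $\ZZ_2$--enlargeable $n$-manifold virtually maps onto $\TT^n$; neither the Gromov--Lawson argument nor the Cecchini--Schick argument, which this theorem is modeled on, proceeds that way.

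Your suggestion that the $\eps$--contracting hypothesis ``should be unnecessary once one has Corollary~\ref{cor_odd_deg_torus}'' is the clearest symptom of the misconception, and it cannot be correct: without a contracting condition, the degree-nonzero map to $S^n$ carries no scalar-curvature obstruction whatsoever ($S^n$ itself admits such a map and is PSC). The contracting hypothesis is used in an essentially geometric, not homotopy-theoretic, way: together with the scalar curvature lower bound it forces the preimage of a tubular neighborhood of a fixed embedded $\TT^{n-1}\subset S^n$ to be a \emph{wide band}, and this width is exactly what is consumed by the $\mu$-bubble / band-width estimate.

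The paper's actual argument also applies Corollary~\ref{cor_odd_deg_torus}, but one dimension down and not to a cover of $M$. Concretely: fix an embedded $\TT^{n-1}\subset S^n(1)$ with a tubular neighborhood $N(A)\cong\TT^{n-1}\times[0,1]$, let $Y$ be a connected component of $f^{-1}(N(A))$ on which the $\ZZ_2$--degree to $\TT^{n-1}$ is nonzero (this $Y$ is a compact band even if $\hat M$ is not compact, because $f$ is constant near infinity and $N(A)$ misses that constant value), minimize a $\mu$-bubble functional on $Y$ with a prescribed-mean-curvature term $h$ built from the $\eps$--Lipschitz function $\rho\circ f$, and obtain a smooth, closed, two-sided, conformally PSC hypersurface $\Sigma^{n-1}\subset Y$ that still maps to $\TT^{n-1}$ with nonzero $\ZZ_2$ degree. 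Corollary~\ref{cor_odd_deg_torus} is then invoked for $\Sigma^{n-1}$ in dimension $n-1$ to reach a contradiction, which is also why the dimension range in the corollary is stated from $2$ while the theorem starts at $3$. Your outline has no analogue of the dimension drop or the $\mu$-bubble, and the finite-cover reduction that replaces them does not hold.
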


\begin{rema}
	Theorem \ref{theo.enlargeable} can be regarded as a generalization of Corollary \ref{cor_odd_deg_torus}. This is because $\TT^n$ is $\ZZ_2$--enlargeable, so every manifold satisfying the conditions of Corollary \ref{cor_odd_deg_torus} is also $\ZZ_2$--enlargeable. 
\end{rema}

The proof of Theorem \ref{theo.enlargeable} is based on the extension of an argument of Cecchini--Schick \cite{CecchiniSchick2021}, where they proved the nonexistence of PSC metrics on orientable (but not necessarily spin) enlargeable manifolds using minimal hypersurfaces. We briefly sketch our argument before getting into the details of the proof.  Without loss of generality, assume that $(M,g)$ satisfies that $R_g>1$, and $(\hat M, \hat g)$ is a Riemannian covering with a map $f: \hat M\to S^n(1)$ with nonzero $\ZZ_2$ degree such that $|df|<\eps$. Here $\eps$ is to be chosen later. We will construct an open band $U$ contained in $\hat M$ such that $f: U\to \TT^{n-1}\times (0,1)$ is proper with nonzero $\ZZ_2$ degree, and the distance between the connected components of $\partial U$ is sufficiently large. These conditions enable us to use the standard $\mu$-bubble to find a two-sided and possibly non-orientable hypersurface $\Sigma\subset U$ that is conformal to a PSC manifold, but also admits a map to $\TT^{n-1}$ with nonzero $\ZZ_2$ degree, contradicting Corollary \ref{cor_odd_deg_torus}.

Now we present the details of the proof. 

\begin{proof}[Proof of Theorem \ref{theo.enlargeable}]
	
	Fix a smooth embedding $\iota: \TT^{n-1}\to S^n(1)$. Denote $A= \iota(\TT^{n-1})$. 
	Let $\varphi: \TT^{n-1}\times [0,1]\to S^n(1)$ be a smooth embedding such that 
	\[
	\varphi(x,\tfrac12) = \iota(x) \quad \text{ for all } x\in \TT^{n-1},
	\]
	and let 
	\[
	\delta = \tfrac12\dist_{S^n(1)} \big(\varphi(\TT^{n-1}\times\{0\}), \varphi(\TT^{n-1}\times\{1\})\big).
	\] 
	Let $N(A) = \imag(\varphi)$. Then $N(A)$ is a closed tubular neighborhood of $A$. 
	
	By the definition of $\delta$, there exists a smooth function 
	\[
	\rho: N(A) \to \RR_{\ge 0}
	\]
	such that $\rho= 0$ on $\varphi(\TT^{n-1}\times \{0\})$, $\rho = \delta$ on $\varphi(\TT^{n-1}\times \{1\})$, and $|d\rho|\le 1$ pointwise on $N(A)$.

	Now we fix $\eps< \tfrac{\delta}{2\pi}$ and consider a Riemannian covering $(\hat M, \hat g)$ with a map $f: \hat M\to S^n(1)$ with $|df|<\eps$. 
	Perturbing $f$ if necessarily, we may assume that $f$ is transverse to $\varphi(\TT^{n-1}\times\{0,\frac12, 1\})$. 
	If $\hat M$ is non-compact, by composing $f$ with a self-isometry of $S^n(1)$ if necessary, we may also assume that the image of $f$ near infinity is not contained in $N(A)$. 
		
	Then for $a = 0, 1$, we have that $f^{-1}(\varphi(\TT^{n-1}\times\{a\}))$ is a closed smooth submanifold of $\hat M$ that has an orientable normal bundle.	
	Note that $f^{-1}(\varphi(\TT^{n-1}\times[0,1]))$ is a smooth, codimension-0 submanifold of $\hat M$ whose boundary is  $f^{-1}(\varphi(\TT^{n-1}\times\{0,1\}))$. For each connected component $Y$ of  $f^{-1}(\varphi(\TT^{n-1}\times[0,1]))$, the $\ZZ_2$ degrees of the maps 
	\begin{equation}
		\label{eqn_deg_f_component}
	f: f^{-1}(\varphi(\TT^{n-1}\times\{a\})) \cap Y \to \varphi(\TT^{n-1}\times\{a\})
	\end{equation}
	are the same for $a=0, 1$ because the manifolds $f^{-1}(\varphi(\TT^{n-1}\times\{a\})) \cap Y$ are cobordant in $Y$. The sum of the $\ZZ_2$ degrees of \eqref{eqn_deg_f_component} for all connected components $Y$  of  $f^{-1}(\varphi(\TT^{n-1}\times[0,1]))$ is equal to the $\ZZ_2$ degree of $f: \hat M \to S^n(1)$. Hence we may find a connected component $Y$ such that the $\ZZ_2$ degrees of \eqref{eqn_deg_f_component} is nonzero. 
	
	Let $Y$ be as above, let 
	\[
	\partial_- Y = f^{-1}(\varphi(\TT^{n-1}\times\{0\})) \cap Y, \quad \partial_+ Y = f^{-1}(\varphi(\TT^{n-1}\times\{1\})) \cap Y.
	\]
	Then $Y$ is a smooth compact manifold with boundary $\partial_-Y\cup \partial_+Y$. 
	The composition function $\rho\circ f$ is a smooth function on $Y$ that is equal to $0$ on $\partial_-Y$, is equal to $\delta$ on $\partial_+Y$, and satisfies $|d(\rho\circ f)| <\tfrac{\delta}{2\pi}$ pointwise on $Y$.

	We consider the standard $\mu$-bubble functional on $Y$. Precisely, fix $\Omega_0 = f^{-1}(\TT^{n-1}\times (0,\tfrac12))\cap Y$. Among relative open subsets $\Omega$ containing $\partial_{-} Y$ and is disjoint from $\partial_+ Y$, minimize
	\[\cA(\Omega) = |\partial \Omega| - \int_Y (\chi_{\Omega} - \chi_{\Omega_0}) h,\]
	here $h$ is a smooth function defined by
	\[h = -\tan\left(\frac{\pi}{\delta} (\rho\circ f) - \frac{\pi}{2}\right).\]
	Since $h\to \mp \infty$ while approaching $\partial_\pm Y$, standard existence and regularity results (see, e.g. \cite{Zhu2021width}) imply that a minimizer $\Omega$ exists and $\Sigma: = \partial \Omega$ is a smooth hypersurface. We know that $\Sigma$ is two-sided as it is contained in the boundary of a domain, and it is cobordant to $\partial_{\pm} Y$ in $Y$. Therefore, the second variation of $\mu$-bubbles implies that
	\[\lambda_1(-\Delta_\Sigma + \frac12 R_\Sigma) \ge \frac12 \inf(R_{\hat g} - 2|\nabla h|+ h^2)>0,\]
	since we assumed that $R_{\hat g}>1$ and 
	\[2|\nabla h|\le \frac{1}{\cos^2(\frac{\pi}{\delta}(\rho\circ f)-\frac{\pi}{2})}\cdot \frac{2\pi}{\delta}|d\rho|\cdot|df|< \frac{1}{\cos^2(\frac{\pi}{\delta}(\rho\circ f)-\frac{\pi}{2})} = 1+h^2.\]
	Therefore $\Sigma$ is conformally PSC.

	Since $\Sigma$ is cobordant to $\partial_{\pm} Y$ in $Y$, the composition map
	\[\Sigma \hookrightarrow Y \xrightarrow{f} N(A) \cong  T^{n-1}\times [0,1] \xrightarrow{\text{projection}}\TT^{n-1}\]
	has the same $\ZZ_2$ degree as \eqref{eqn_deg_f_component}, which is assumed to be nonzero. Since $\Sigma$ is conformally PSC, this yields a contradiction with Corollary \ref{cor_odd_deg_torus}.
\end{proof}

With the above discussions, we can now easily extend Gromov's band width estimates to possibly non-orientable bands that admits a map with nonzero mod $2$ degree to $\TT^{n-1}\times [0,1]$.

\begin{defi}\label{defi.band.torical}
	A band is a compact, possibly non-orientable manifold $Y^n$ together with a decomposition $\partial Y = \partial_{-} Y\sqcup \partial_+ Y$, where $\partial_{\pm} Y$ are nonempty, disjoint unions of connected components of $\partial Y$. A band $Y$ is called $\ZZ_2$-over-torical, if it admits a proper continuous map
	\[f: Y\to \TT^{n-1}\times [0,1]\]
	with nonzero mod $2$ degree and satisfies $f(\partial_{-} Y) = \TT^{n-1}\times \{0\}$, $f(\partial_+ Y ) = \TT^{n-1}\times \{1\}$.
\end{defi}

As in the proof of Theorem \ref{theo.enlargeable}, we observe that Theorem \ref{theo.cohomology} and Corollary \ref{cor_odd_deg_torus} imply the following band width estimates.

\begin{prop}\label{prop.band.width}
	Assume $n\le 7$, $Y^n$ is a  $\ZZ_2$-over-torical band with a Riemannian metric $g$ satisfying $R_g\ge n(n-1)$. Then 
	\[\dist_g (\partial_{-} Y, \partial_+ Y) \le \frac{2\pi}{n}.\]
\end{prop}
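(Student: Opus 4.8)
The plan is to adapt the $\mu$-bubble argument from the proof of Theorem~\ref{theo.enlargeable}, now applied directly to the band $Y$, with the warping function calibrated so that the hypothesis $R_g\ge n(n-1)$ produces the \emph{sharp} width $2\pi/n$. We may assume $n\ge 3$. Arguing by contradiction, suppose $\dist_g(\partial_-Y,\partial_+Y)>2\pi/n$ and fix $\ell_0$ with $2\pi/n<\ell_0<\dist_g(\partial_-Y,\partial_+Y)$.

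\emph{Auxiliary function and reduction to a sub-band.} First I would smooth the $1$-Lipschitz function $\dist_g(\cdot,\partial_-Y)$, keeping it equal to the collar coordinate near $\partial_-Y$, to obtain a smooth $\rho\colon Y\to[0,\infty)$ with $|\nabla\rho|\le 1$, $\rho>0$ on $\inte Y$, $\rho\to 0$ along $\partial_-Y$, and $\rho>\ell_0$ near $\partial_+Y$ (the last point because $\ell_0<\dist_g(\partial_-,\partial_+)$). Taking $\ell_0$ to be a regular value and replacing $Y$ by the sub-band $Y_1=\{\rho<\ell_0\}$ — still a band, with $\partial_-Y_1=\partial_-Y$ and $\partial_+Y_1=\rho^{-1}(\ell_0)$ — I may assume $0<\rho<\ell_0$ on $\inte Y_1$, with $\rho\to 0$ along $\partial_-Y_1$ and $\rho\to\ell_0$ along $\partial_+Y_1$. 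The key point to record here is topological: any closed hypersurface $\Sigma\subset\inte Y_1$ that is cobordant in $Y_1$ to $\partial_-Y_1$ has the property that the composite $\Sigma\hookrightarrow Y_1\xrightarrow{f}\TT^{n-1}\times[0,1]\to\TT^{n-1}$ carries the same mod~$2$ degree as $f|_{\partial_-Y}$, namely the nonzero mod~$2$ degree of $f$ (the cobordism argument is as in the proof of Theorem~\ref{theo.enlargeable}).

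\emph{The $\mu$-bubble and its stability.} Next I would set, on $\inte Y_1$,
\[ h=\frac{2\pi(n-1)}{n\,\ell_0}\,\cot\!\Big(\frac{\pi\rho}{\ell_0}\Big), \]
so that $h\to+\infty$ along $\partial_-Y_1$ and $h\to-\infty$ along $\partial_+Y_1$, and minimize $\cA(\Omega)=|\partial\Omega|-\int_{Y_1}(\chi_\Omega-\chi_{\Omega_0})h$ over Caccioppoli sets containing a collar of $\partial_-Y_1$ and disjoint from $\partial_+Y_1$ (here $\Omega_0$ is a fixed such set). Since $h$ blows up at the two ends, standard $\mu$-bubble existence and regularity (valid for $n\le 7$; see e.g.\ \cite{Zhu2021width}) yield a smooth closed two-sided hypersurface $\Sigma=\partial\Omega\subset\inte Y_1$, cobordant to $\partial_\pm Y_1$. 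The second variation of $\cA$, together with the Gauss equation and the mean-curvature constraint $|A|^2\ge\frac{H^2}{n-1}=\frac{h^2}{n-1}$ (with $H=\tr A=h$ along a $\mu$-bubble), gives
\[ \int_\Sigma|\nabla\psi|^2+\tfrac12 R_\Sigma\,\psi^2\ \ge\ \tfrac12\int_\Sigma\Big(R_g+\tfrac{n}{n-1}h^2-2|\nabla h|\Big)\psi^2 \qquad\text{for all }\psi. \]
A short computation — in which the constant $\frac{2\pi(n-1)}{n\ell_0}$ is chosen precisely to cancel the $\cot^2$ term — shows
\[ R_g+\tfrac{n}{n-1}h^2-2|\nabla h|\ \ge\ n(n-1)-\frac{4\pi^2(n-1)}{n\,\ell_0^2}\ >\ 0 \]
on $\inte Y_1$, using $R_g\ge n(n-1)$, $|\nabla\rho|\le 1$, and $\ell_0>2\pi/n$. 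Hence $\lambda_1(-\Delta_\Sigma+\tfrac12 R_\Sigma)>0$, and since $\tfrac12\ge\frac{n-3}{4(n-2)}$, the Schoen--Yau conformal rearrangement (as in the proofs of Theorems~\ref{theo.cohomology} and~\ref{theo.enlargeable}) shows every component of $\Sigma^{n-1}$ admits a metric of positive scalar curvature.

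\emph{Contradiction, and the main obstacle.} Since $\Sigma$ is cobordant in $Y_1$ to $\partial_-Y_1$, by the topological remark above the map $\Sigma\to\TT^{n-1}$ has nonzero mod~$2$ degree, so some component $\Sigma_0$ of the closed, possibly non-orientable manifold $\Sigma^{n-1}$ (with $\dim\Sigma_0=n-1\le 6$) maps to $\TT^{n-1}$ with nonzero mod~$2$ degree. By Corollary~\ref{cor_odd_deg_torus}, $\Sigma_0$ admits no PSC metric, contradicting the previous step. The step I expect to be the real obstacle is obtaining the \emph{sharp} constant $2\pi/n$: discarding $|A|^2$ entirely, as is done in the proof of Theorem~\ref{theo.enlargeable}, only yields width $\le 2\pi/\sqrt{n(n-1)}$, so one must retain the mean-curvature term $|A|^2\ge H^2/(n-1)$ in the stability inequality and calibrate the warping function accordingly. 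The remaining ingredients — smoothing the distance function, the sub-band reduction, $\mu$-bubble existence and regularity, and the conformal rearrangement — are routine given the results already established.
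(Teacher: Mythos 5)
Your proposal is correct and takes essentially the same route as the paper: the paper's proof consists of the key observation that any hypersurface cobordant in $Y$ to $\partial_{\pm}Y$ still carries a map of nonzero $\ZZ_2$-degree to $\TT^{n-1}$ (so Corollary~\ref{cor_odd_deg_torus} applies in place of the orientable obstruction), and then defers to Gromov's $\mu$-bubble argument from \cite{Gromov2018metric}, which you have simply unpacked in full, including the correct warping function $h=\tfrac{2\pi(n-1)}{n\ell_0}\cot(\tfrac{\pi\rho}{\ell_0})$ and the retention of $|A|^2\ge H^2/(n-1)$ needed for the sharp constant $2\pi/n$. The one minor point worth flagging is the remark about Schoen--Yau conformal rearrangement: for $n=3$ (so $\dim\Sigma=2$) the coefficient $\tfrac{n-3}{4(n-2)}$ vanishes and one should instead invoke Gauss--Bonnet to see that $\Sigma$ is $S^2$ or $\RR P^2$ (as the paper does in the proof of Theorem~\ref{theo.cohomology}), but this does not affect the conclusion since Corollary~\ref{cor_odd_deg_torus} still gives the contradiction.
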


\begin{proof}
	Observe that each connected component of $\partial_{\pm} Y$ is two-sided and admits a map with nonzero mod $2$ degree to $\TT^{n-1}$, and so is every hypersurface in $Y$ that is cobordant to $\partial_{\pm} Y$. Therefore, Gromov's $\mu$-bubble proof \cite{Gromov2018metric} of the over-torical band width estimates carries over verbatim to this case.
\end{proof}

\section{The $3$--dimensional case}
\label{sec.3d}
In this section, we prove that examples described by Theorem \ref{theo.example} do not exist in $3$-dimensions:

\begin{theo}\label{theo.3d}
	Let $M^3$ be a closed non-orientable three manifold. Then $M$ has a PSC metric if and only if its orientation double cover $\hat M$ does.
\end{theo}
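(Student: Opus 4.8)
The plan is to prove the nontrivial direction: if the orientation double cover $\hat M$ of a closed non-orientable $3$-manifold $M$ admits a PSC metric, then so does $M$. (The converse is the trivial statement recalled in the introduction.) The key point is that in dimension $3$, the topology of PSC manifolds is completely understood via the Gromov--Lawson / Schoen--Yau surgery results together with the resolution of the geometrization conjecture: a closed orientable $3$-manifold admits a PSC metric if and only if it is a connected sum of spherical space forms and copies of $S^2\times S^1$. So the strategy is entirely topological: translate the hypothesis ``$\hat M$ is PSC'' into a structural statement about $\pi_1(M)$, and then build a PSC metric on $M$ by hand (or quote an existence result) from that structure.

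The key steps, in order, would be: (1) First I would pass to the orientable prime decomposition of $\hat M$ and invoke the classification to conclude $\pi_1(\hat M)$ is a free product of finite groups and copies of $\ZZ$; in particular $\pi_1(\hat M)$ contains no subgroup isomorphic to $\ZZ^2$ (equivalently, $\hat M$ has no aspherical prime summand), and every finite subgroup acts freely on a homotopy $3$-sphere. (2) Since $\pi_1(\hat M)\le \pi_1(M)$ has index $2$, I would use the fact that $\ZZ^2$ (and more generally any ``large'' subgroup such as a closed-aspherical-$3$-manifold group) has finite index over-groups only of the same type to deduce that $\pi_1(M)$ likewise has no $\ZZ\oplus\ZZ$ subgroup; hence by geometrization applied to the orientable summands of the prime decomposition of $M$, no prime summand of $M$ is aspherical. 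This is the step where one must be careful with non-orientable primes: the prime decomposition of a non-orientable $3$-manifold may involve $S^2\tilde\times S^1$ (the twisted bundle) and non-orientable summands covered by spherical space forms, so I would argue summand-by-summand that each is covered by an $S^3$-geometry or $S^2\times\RR$-geometry manifold. (3) Having reduced to the case that $M$ is prime, I would treat the two cases: either $M$ has finite $\pi_1$, in which case $\tilde M$ is a homotopy $3$-sphere, hence (Perelman) $S^3$, so $M$ is a spherical space form and carries a round PSC metric; or $\pi_1(M)$ is infinite, and then primeness plus the absence of aspherical summands forces $M$ to be finitely covered by $S^2\times S^1$, so $M$ is one of $S^2\times S^1$, $S^2\tilde\times S^1$, or $\RR P^3\#\RR P^3$ — all of which carry explicit PSC metrics. (4) Finally, reassemble: a connected sum of PSC $3$-manifolds admits a PSC metric by the Gromov--Lawson surgery construction (which works in the non-orientable category as well), completing the proof.

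The main obstacle I expect is Step (2): controlling how the non-orientable prime decomposition of $M$ interacts with the index-$2$ orientable cover, and in particular ruling out an aspherical prime summand of $M$ purely from the PSC hypothesis on $\hat M$. The cleanest route is probably to argue directly at the level of fundamental groups using the Kneser--Milnor decomposition and the fact (consequence of geometrization, or of the work on the Gromov--Lawson--Rosenberg conjecture in dimension $3$) that a closed aspherical $3$-manifold group cannot be a subgroup of index $2$ in the fundamental group of a PSC $3$-manifold — if such a subgroup $H\le\pi_1 M$ existed, then the corresponding double cover of $M$ would contain an incompressible aspherical piece, contradicting that $\hat M$ (its canonical orientation double cover, which agrees with this cover up to the orientation issue) is PSC. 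I would also need the minor point that ``admits a PSC metric'' for closed $3$-manifolds depends only on $\pi_1$ together with orientability data, which again follows from the classification; alternatively one avoids this by working directly with the geometric decomposition. A shorter alternative worth mentioning: one can instead run the Schoen--Yau / stable minimal surface descent directly on $M$ — if $M$ had no PSC metric one would extract, from the PSC hypothesis and the covering, topological information incompatible with $\hat M$ being PSC — but the classification-based argument above is the most transparent, so that is the one I would write up.
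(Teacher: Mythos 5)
Your approach is genuinely different from the paper's, and the high-level strategy---translate the PSC hypothesis on $\hat M$ into a group-theoretic constraint on $\pi_1(M)$ via the classification of orientable PSC $3$-manifolds, then rebuild $M$ from PSC prime pieces---is a reasonable alternative. The paper instead constructs a bespoke decomposition of $M$ along embedded two-sided $S^2$'s \emph{and} $\RR P^2$'s (Proposition \ref{prop_P2_decompose}, via the generalized sphere theorem), closes up each piece $C$ to an orientable manifold $\widetilde{C}$, shows each $\widetilde{C}$ is either aspherical or has finite fundamental group, kills the aspherical case by observing that $\hat M$ would then admit a degree-nonzero map to a closed aspherical $3$-manifold (impossible for PSC by Schoen--Yau and Gromov--Lawson), and reassembles $M$ via $0$-surgeries and $0$-orbifold surgeries, which preserve PSC. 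Your route bypasses the $\RR P^2$ machinery and the orbifold reassembly, at the cost of a more delicate group-theoretic middle step and the need to track how the (non-unique) prime decomposition of the non-orientable $M$ interacts with that of $\hat M$.

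However, Step (2) as written has a genuine gap: the absence of $\ZZ\oplus\ZZ$ subgroups in $\pi_1(M)$ does \emph{not} rule out aspherical prime summands. A closed hyperbolic $3$-manifold is aspherical, yet its fundamental group is word-hyperbolic and contains no $\ZZ^2$. What you actually need is the stronger conclusion already available from Step (1): $\pi_1(\hat M)$ is a free product of finite groups and copies of $\ZZ$, hence virtually free, hence of virtual cohomological dimension at most $1$; since $\pi_1(\hat M)$ has index $2$ in $\pi_1(M)$, the latter is also virtually free. If $P$ were a closed aspherical prime summand of $M$, then $\pi_1(P)$ would be a free factor, hence a subgroup, of $\pi_1(M)$; but $\pi_1(P)$ is torsion-free of cohomological dimension $3$, and every subgroup of a virtually free group is virtually free---contradiction. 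You flag the right worry and gesture at this fix in your final paragraph, but the ``index $2$'' phrasing there is also off, since $\pi_1(P)$ sits inside $\pi_1(M)$ with arbitrary (typically infinite) index, not index $2$. Finally, a smaller point in Step (3): $\RR P^3\#\RR P^3$ is not prime and should not appear in the list of prime $S^2\times\RR$ cases; the one you omit is $\RR P^2\times S^1$, which fortunately also carries a PSC metric, so the conclusion survives.
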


The proof is based on Proposition \ref{prop_P2_decompose} below, which extends the prime decomposition to non-orientable $3$-manifolds. Proposition \ref{prop_P2_decompose} is probably known to experts in 3-manifold topology. See, for example,  \cite[Section 1.1, Exercies 4]{hatcher2007notes}. Here we give a brief proof since we were unable to find a direct reference. We will also sketch a second proof in a more geometric flavor that is closely related to \cite[Section 3]{BamlerLiMantoulidis}.

To streamline the exposition, we introduce the following notation.

\begin{defi}
	\label{defn_tilde_Y}
	Suppose $Y$ is a compact connected three-manifold that is possibly non-orientable. Suppose each connected component of $\partial Y$ is $S^2$ or $\RR P^2$. Define a closed orientable manifold $\widetilde{Y}$ as follows. If $Y$ is orientable, then every connected component of $\partial Y$ is $S^2$, and we let $\widetilde{Y}$ be the manifold obtained by gluing a copy of $D^3$ to each component of $\partial Y$. If $Y$ is non-orientable, let $\hat Y$ be the orientation double cover of $Y$, define $\widetilde{Y}$ to be the manifold obtained by gluing a copy of $D^3$ to each component of $\partial \hat Y$.
\end{defi}

Note that $\widetilde{Y}$ has a subset $Y^\dagger$ that is either the orientation double cover of $Y$ or diffeomorphic to $Y$. So we have a canonical isomorphism $\pi_2(Y^\dagger)\cong \pi_2(Y)$, thus the inclusion of $Y^\dagger$ in $Y$ induces a canonical map 
\begin{equation}
	\label{eqn_pi2_Y_to_tildeY}
	\pi_2(Y) \to \pi_2(\widetilde{Y}). 
\end{equation}
Since the second homotopy group is isomorphic to the second homology group of the universal cover, and since the universal cover of $\widetilde{Y}$ is obtained from the universal cover of $Y$ by filling in the boundaries with $D^3$, we note that \eqref{eqn_pi2_Y_to_tildeY} is a surjection.

\begin{prop}
	\label{prop_P2_decompose}
	Suppose $M$ is a closed, non-orientable, compact 3-manifold. There exists a finite collection of smooth, disjoint, embedded, two-sides surfaces, each diffeomorphic to $S^2$ or $\RR P^2$, such that after cutting open $M$ along the these surfaces, each component $C$ satisfies that either $\widetilde{C}$ is aspherical or $\widetilde{C}$ has a finite fundamental group. 
\end{prop}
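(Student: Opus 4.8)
The plan is to reduce Proposition \ref{prop_P2_decompose} to the classical prime (Kneser--Milnor) decomposition applied to the orientable double cover, or rather to adapt the Kneser argument directly. First I would recall the non-orientable prime decomposition: every closed $3$-manifold $M$ can be written as a connected sum $M = P_1 \# \cdots \# P_r \# (S^2\times S^1)^{\# a} \# (S^2\tilde\times S^1)^{\# b}$, where each $P_i$ is irreducible (every embedded $S^2$ bounds a ball), and this is essentially Kneser's theorem, which does not require orientability. Cutting $M$ along the collection of separating $S^2$'s that realize this connected sum decomposition produces pieces that are, up to filling in $S^2$ boundary components with $D^3$, either irreducible or diffeomorphic to $S^2\times S^1$ or $S^2\tilde\times S^1$. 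For the latter two, $\widetilde{C}$ is a quotient or cover with $\pi_1 = \ZZ$ or finite — actually $S^2\times S^1$ and $S^2\tilde\times S^1$ have $\pi_2\ne 0$, so I must handle them: I would split them off by a further $S^2$ and note that after gluing in $D^3$ we get $S^3$, which has finite (trivial) fundamental group. So the only remaining case is the irreducible pieces.

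Next I would deal with the irreducible non-orientable pieces. Here the new ingredient, as the notation $\widetilde{C}$ suggests, is that an irreducible non-orientable $C$ may still have $\pi_2(C)\ne 0$ coming from a two-sided $\RR P^2$: the sphere theorem shows $\pi_2(C)\ne 0$ forces an embedded $S^2$ or $\RR P^2$, and if no embedded $S^2$ is essential then there is an embedded two-sided $\RR P^2$. So I would further cut $C$ along a maximal disjoint collection of essential two-sided $\RR P^2$'s (a finite collection, by a Kneser-type finiteness argument using a fixed triangulation, since disjoint essential $\RR P^2$'s that are pairwise non-parallel are bounded in number — this is the standard normal-surface compactness argument). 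Cutting along a two-sided $\RR P^2$ either caps off to give an $\RR P^3$ summand (which has $\pi_1 = \ZZ_2$ finite, and $\widetilde{\RR P^3}$, being already orientable, is $\RR P^3$ itself with finite $\pi_1$) or reduces $\pi_2$. After removing all these, the resulting pieces $C'$ have $\pi_2(C') = 0$; by the surjectivity of \eqref{eqn_pi2_Y_to_tildeY} this forces $\pi_2(\widetilde{C'}) = 0$ as well.

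Now for a piece $C'$ with $\pi_2(C') = 0$ and such that all boundary components are $S^2$ or $\RR P^2$, I claim $\widetilde{C'}$ is either aspherical or has finite fundamental group. Since $\widetilde{C'}$ is a \emph{closed} orientable $3$-manifold (by construction, capping the boundary spheres of $C'$ or of its orientation double cover with $D^3$) with $\pi_2(\widetilde{C'}) = 0$, it is irreducible (an essential $S^2$ would give $\pi_2 \ne 0$, and a non-essential one bounds a ball). A closed orientable irreducible $3$-manifold is either aspherical or is $S^3/\Gamma$ for a finite group $\Gamma$ (equivalently $\pi_3$ is realized; this follows from the sphere theorem plus the fact that the universal cover is an open simply connected $3$-manifold with no essential $S^2$, hence contractible unless it is $S^3$ — this step uses the resolution of the Poincaré conjecture, or one may cite the standard fact that an irreducible closed orientable $3$-manifold has universal cover $\RR^3$ or $S^3$). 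So $\pi_1(\widetilde{C'})$ is infinite $\Rightarrow$ universal cover $\RR^3$ $\Rightarrow$ $\widetilde{C'}$ aspherical; otherwise $\pi_1$ is finite.

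The main obstacle I anticipate is the bookkeeping: assembling the $S^2$'s from Kneser--Milnor and the two-sided $\RR P^2$'s into a single disjoint embedded collection, verifying finiteness of the $\RR P^2$ collection, and checking that cutting along a two-sided $\RR P^2$ interacts correctly with the operation $C \mapsto \widetilde C$ (in particular that cutting does not create bad boundary components — one must note that cutting along $S^2$ or two-sided $\RR P^2$ yields new boundary components that are $S^2$ or $\RR P^2$ respectively, so the hypothesis on $\partial$ is preserved). A cleaner route, which I would present as the "geometric" alternative, is to pass directly to $\hat M$, run the ordinary Kneser--Milnor decomposition there equivariantly under the deck involution $\gamma$, and push the $\gamma$-invariant sphere system down to $M$, where a $\gamma$-invariant $S^2$ descends either to an embedded $S^2$ or, if $\gamma$ acts freely on it as the antipodal map, to an embedded two-sided $\RR P^2$; this is the connection to \cite[Section 3]{BamlerLiMantoulidis}. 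I would fill in whichever of these two is shorter to write rigorously.
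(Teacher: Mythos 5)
Your overall strategy (run the Kneser--Milnor prime decomposition, then cut irreducible non-orientable pieces along two-sided $\RR P^2$'s, then argue about $\widetilde{C'}$) is in the same spirit as the paper's argument, but there is a genuine gap at the crucial step. You claim that after cutting along a maximal system of essential two-sided $\RR P^2$'s ``the resulting pieces $C'$ have $\pi_2(C')=0$'', and you deduce $\pi_2(\widetilde{C'})=0$ from surjectivity of \eqref{eqn_pi2_Y_to_tildeY}. The first claim is false: once $C'$ has an $\RR P^2$ boundary component, $\pi_2(C')\neq 0$ automatically (the $S^2$ double covering a boundary $\RR P^2$ is a non-trivial element of $\pi_2$). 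The sphere theorem will then produce an essential two-sided $\RR P^2$, but it may be boundary-parallel, and cutting along a boundary-parallel $\RR P^2$ makes no progress (it just splits off an $\RR P^2\times I$). So the cutting process terminates while $\pi_2(C')$ is still non-trivial, and your argument that $\pi_2(\widetilde{C'})=0$ collapses.

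The correct statement to aim for is directly $\pi_2(\widetilde{C'})=0$, and proving it requires more than surjectivity of \eqref{eqn_pi2_Y_to_tildeY}: this is precisely where the paper invokes the \emph{generalized} sphere theorem \cite[Theorem 4.12]{hempel20223} applied to the $\pi_1$-invariant subgroup $N=\ker\big(\pi_2(C')\to\pi_2(\widetilde{C'})\big)$. If $\pi_2(\widetilde{C'})\neq 0$ then $N$ is a proper invariant subgroup, so there is a two-sided embedded $S^2$ or $\RR P^2$ whose class lies outside $N$; but by the stopping condition every such surface becomes null-homotopic after passing to $\widetilde{C'}$ (its lift bounds a ball once the boundary spheres are filled), i.e.\ its class lies in $N$ --- a contradiction. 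You need some version of this kernel argument; surjectivity alone does not suffice. Relatedly, your stopping condition should be phrased so that it explicitly rules out only \emph{non-boundary-parallel} $\RR P^2$'s, otherwise it is not clear the process terminates, and the Kneser-type finiteness for the $\RR P^2$'s should be justified by passing to the orientation double cover (where Kneser finiteness for orientable manifolds applies, counting each $\RR P^2$ as two disjoint $S^2$'s), as the paper does, rather than asserted directly for the non-orientable manifold.

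Two smaller remarks. First, the paper does not invoke the prime decomposition as a black box; it just cuts iteratively and uses the Kneser bound on $\hat{M'}$ to terminate, which is a bit cleaner than assembling Kneser--Milnor $S^2$'s with a separate $\RR P^2$ system. Second, you do not need the Poincar\'e conjecture or geometrization for the final dichotomy: once $\pi_2(\widetilde{C'})=0$ and $\pi_1(\widetilde{C'})$ is infinite, the universal cover is an open simply connected $3$-manifold with trivial $\pi_2$, hence contractible by Hurewicz/Whitehead, so $\widetilde{C'}$ is aspherical. The proposition only asserts ``aspherical or finite $\pi_1$'', not ``aspherical or spherical space form''.
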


\begin{proof}
	By a straightforward computation using the Mayer--Vietoris sequence, we know that if $Y$ is a compact connected $3$--manifold and $\Sigma\subset Y$ is an embedded two-sided surface that is diffeomorphic to $S^2$ or $\RR P^2$, then cutting $Y$ open along $\Sigma$ decreases the rank of $H^1(Y;\ZZ)$ by $1$. Assume $M$ is connected without loss of generality. Then, after cutting open $M$ along a finite collection of two-sided $S^2$'s and $\RR P^2$'s, we obtain a connected manifold $M'$ such that each embedded two-sided $S^2$ and $\RR P^2$ in $M'$ is separating. 
	
	If $M'$ admits an embedded two-sided surface $\Sigma$ such that:
	\begin{enumerate}
		\item $\Sigma$ is diffeomorphic to $S^2$ or $\RR P^2$,
		\item cutting open $M'$ yields two components $M'_1$ and $M'_2$ such that neither $\widetilde{M_1'}$ nor $\widetilde{M_2'}$ is diffeomorphic to $S^3$,
	\end{enumerate}
	we cut open $M'$ along $\Sigma$, and repeat the process. We first show that the process must end after finitely many steps. 
	
	Define $\hat {M'}$ to be the orientation double cover of $M'$ if $M'$ is non-orientable, and define $\hat{M'}$ to be two copies of $M'$ if $M'$ is orientable.  We invoke the following result that is part of the proof of the prime decomposition theorem for oriented 3-manifolds; see, for example, \cite{hatcher2007notes}, Statement ($\ast$) in the proof of Theorem 1.5: 
	\begin{lemm*}
			There exists a constant $n_0$ depending only on $\hat{M'}$, such that if $n\ge n_0$ and $\Sigma_1,\dots,\Sigma_{n}$ are disjoint embedded spheres in $\hat{M'}$, then after cutting open $\hat{M'}$ along all $\Sigma_i$'s, there is at least one component $C$ such that $\widetilde{C}\cong S^3$. 
	\end{lemm*}
	By the above lemma, if $M'$ is cut open along a collection of disjoint embedded two-sided surfaces consisting of $a$ copies of $S^2$'s and $b$ copies of $\RR P^2$'s, and if $2a+b\ge n_0$, then there exists a component $C$ in the resulting manifold such that $\widetilde{C} \cong S^3$. As a result, the process of cutting $M'$ must end after finitely many steps. 
	
	In summary, we have obtained a finite collection of disjoint embedded two-sided surfaces $\Sigma_1,\dots,\Sigma_n$ in $M'$, such that each surface is $S^2$ or $\RR P^2$, and cutting open $M'$ along the surfaces yields $n+1$ components 
	$M'_1,\dots,M'_{n+1}.$ Moreover, for each component $M_i'$, if $\Sigma\subset M_i'$ is an embedded $S^2$ or $\RR P^2$, then cutting open $M_i'$ along $\Sigma$ will yield at least one component $C$ such that $\widetilde{C}\cong S^3$. 
	
	We claim that $\pi_2(\widetilde{M_i'}) = 0$ for all $i$. Assume the contrary, let $N$ be the kernel of the map 
	$\pi_2(M_i') \to \pi_2(\widetilde{M_i'})$ given by \eqref{eqn_pi2_Y_to_tildeY}. Since \eqref{eqn_pi2_Y_to_tildeY} is a surjection, we have $N \subsetneq \pi_2(M_i')$. Note that $N$ is invariant under the $\pi_1(M_i')$--action. So, by the generalized sphere theorem (see \cite[Theorem 4.12]{hempel20223}), there exists an embedded two-sided surface $\Sigma\subset M_i'$ that is diffeomorphic to $S^2$ or $\RR P^2$ such that the image of $\pi_2(\Sigma)\to \pi_2(M_i')$ is not contained in $N$. On the other hand, by the properties of $M_i$, cutting open $M_i$ along $\Sigma$ yields at least one component $C$ such that $\widetilde{C}\cong S^3$, so the pre-image of $\Sigma$ in $\widetilde{M_i'}$ is null-homotopic, and hence $\pi_2(\Sigma)\to \pi_2(M_i')$ is contained in $N$. This gives a contradiction. As a result, we must have $\pi_2(\widetilde{M_i'}) = 0$ for all $i$.
	
	If  $\pi_1(\widetilde{M_i'})$ is infinite, then the universal cover of $\widetilde{M_i'}$ is a non-compact 3-manifold with vanishing $\pi_1$ and $\pi_2$. By the Hurewicz theorem, $\widetilde{M_i'}$ is aspherical. Therefore, each $\widetilde{M_i'}$ is either aspherical or has a finite fundamental group. 
\end{proof}

\begin{proof}[Proof of Theorem \ref{theo.3d}]
	Suppose that $\hat M$ has a PSC metric. Applying Proposition \ref{prop_P2_decompose} to $M$, we may cut $M$ along a disjoint union of embedded two-sided surfaces $\Sigma_1,\cdots, \Sigma_n$, each $\Sigma_j$ is either $S^2$ or $\RR P^2$, such that $M\setminus (\sqcup_{j=1}^n \Sigma_j) = \sqcup_{i=1}^m C_i$, and each $\widetilde C_i$ is either a spherical space form (if its fundamental group is finite), or is aspherical. Passing to the orientation double cover, $\hat M$ can be obtained by taking connected sums of copies of $\widetilde{C}_i$'s and $S^1\times S^2$'s. Since $\hat M$ has a PSC metric, we conclude from \cite{SY79} or \cite{GromovLawson80} that it does not admit any map with non-zero degree to a closed aspherical $3$-manifold. Consequently, each $\widetilde{C}_i$ is not aspherical and hence is a spherical space form. 
	
	We therefore conclude that $M$ can be obtained by performing either standard connected sums (i.e. $0$-surgeries), or $0$-orbifold surgeries at isolated orbifold points with $\ZZ_2$ symmetry, on the disjoint union of spherical space forms (that are possibly orbifolds). Since these surgeries preserve the PSC condition (see, e.g. \cite{GromovLawson80}), we conclude that $M$ has a PSC metric.
\end{proof}

\begin{rema}
	We briefly indicate a more geometric proof of Proposition \ref{prop_P2_decompose}. This proof closely aligns with the argument of \cite[Section 3]{BamlerLiMantoulidis}. In the place of the generalized sphere theorem \cite[Theorem 4.12]{hempel20223}, we invoke the geometric result of Meeks--Yau \cite{MeeksYau}: for a closed $3$-manifold $(M,g)$, there exist finitely many conformal maps $f_1,\cdots, f_n: S^2\to M$ with pairwise disjoint images, such that $\{f_j\}$ generates $\pi_2(M)$ as a $\pi_1(M)$-module, each $f_j$ minimizes area in its homotopy class, and each $f_j$ is either an embedding or a two-to-one covering to $\RR P^2$. Observe that the image of each $f_j$ is two-sided. This is obvious if $f_j$ is an embedding (as $S^2$ is simply connected); if $f_j$ has an image of $\Sigma_j = \RR P^2$, then it is also necessarily two-sided, as otherwise a tubular neighborhood of $\Sigma_j$ is diffeomorphic to $\RR P^3\setminus \{pt\}$, and hence $\Sigma_j$ would be homotopically trivial and thus contradicting its minimizing property.
	
	Consequently, the proof of \cite[Theorem 3.1]{BamlerLiMantoulidis} carries over verbatim; it also implies that each non-orientable PSC $3$-manifold can be obtained from a disjoint union of spherical space forms (that are possibly orbifolds) by performing $0$-surgeries (possibly at isolated orbifold singularities). 
\end{rema}

\bibliographystyle{amsplain}
\bibliography{bib.bib}

\end{document}